\newcommand{\bbR}{\ensuremath{\mathbb{R}}}
\newcommand{\bbZ}{\ensuremath{\mathbb{Z}}}
\newcommand{\bbT}{\ensuremath{\mathbb{T}}}
\newcommand{\bbN}{\ensuremath{\mathbb{N}}}
\newcommand{\bbE}{\ensuremath{\mathbb{E}}}
\newcommand{\bbP}{\ensuremath{\mathbb{P}}}
\newcommand{\bbone}{\ensuremath{\mathds{1}}}
\newcommand{\backsslash}{\mathbin{\mkern-2mu \char`\\ \mkern-6mu \char`\\ \mkern-4mu}}
\DeclareMathOperator{\Path}{Path}
\DeclareMathOperator{\supp}{supp}
\numberwithin{equation}{section}
\newtheorem{thm}{Theorem}[section]
\newtheorem{prop}[thm]{Proposition}
\newtheorem{lem}[thm]{Lemma}
\newtheorem{cor}[thm]{Corollary}
\theoremstyle{definition}
\newtheorem{dfn}[thm]{Definition}
\newtheorem{rmk}[thm]{Remark}
\newtheorem{ex}[thm]{Example}
\newtheorem{claim}[thm]{Claim}
\newtheorem{conj}[thm]{Conjecture}
\newtheorem{question}[thm]{Question}
\newtheorem*{assume}{Standing assumption}       
\newcommand{\showcomments}{yes}
\newsavebox{\commentbox}
\newenvironment{mycomment}%
{\ifthenelse{\equal{\showcomments}{yes}}%
{\footnotemark
        \begin{lrbox}{\commentbox}
        \begin{minipage}[t]{1.25in}\raggedright\sffamily\tiny
        \footnotemark[\arabic{footnote}]}
{\begin{lrbox}{\commentbox}}}
{\ifthenelse{\equal{\showcomments}{yes}}
{\end{minipage}\end{lrbox}\marginpar{\usebox{\commentbox}}}
{\end{lrbox}}}
\begin{document}

\title{Heat kernels are not uniform expanders} 

\author[M. Fraczyk]{Mikolaj Fraczyk}
\address{Renyi Institute, Hungarian Academy of Sciences\\ R\'ealtanoda utca 13-15, Budapest 1053\\
Hungary}

\author[W. van Limbeek]{Wouter van Limbeek}
\address{Department of Mathematics, Statistics, and Computer Science \\ 
                 University of Illinois at Chicago \\
                 Chicago, IL 60647 \\
                 USA}
                 
\date{\today}

\begin{abstract}
We study infinite analogues of expander graphs, namely graphs where subgraphs weighted by heat kernels form an expander family. Our main result is that there does not exist any infinite expander in this sense. This proves the analogue for random walks of Benjamini's conjecture that there is no infinite graph whose metric balls are uniformly expander. The proof relies on a study of stationary random graphs, in particular proving non-expansion of heat kernels in that setting. A key result is that any stationary random graph is stationary hyperfinite, which is potentially of independent interest.
\end{abstract}

\maketitle


\section{Introduction}
\label{sec:intro}

A sequence of finite graphs $(\mathcal G_n)_n$ of uniformly bounded degrees is an {\bf expander} sequence if $|\mathcal G_n|\to\infty$ and there exists $\varepsilon>0$ such that every $A\subset \mathcal G_n$ with $|A|\leq |\mathcal G_n|/2$ satisfies $|\partial A|\geq \varepsilon |A|$. Expander graphs are therefore sparse robust networks, which makes them of great utility in applications. One therefore wonders whether instead of a sequence, there is a single infinite graph that is sparse yet robust? 

More precisely, Benjamini defined an infinite, connected, bounded degree graph $\mathcal G$ to be an {\bf expander at all scales} if there exists $\varepsilon>0$ such that every ball $B\subset \mathcal G$ and subset $A\subset \mathcal G$ with $|A\cap B|\leq |B|/2$ satisfy $|\partial A\cap B|\geq \varepsilon |A\cap B|$, and conjectured:
	\begin{conj}[{Benjamini \cite{BenjaminiConj}}] 
	\label{conj:benjamini}
	Expanders at all scales do not exist. 
	\end{conj}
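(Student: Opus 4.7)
The plan is to argue by contradiction, following the heat-kernel strategy developed in this paper. Assume $\mathcal G$ is an expander at all scales with constant $\varepsilon>0$. I would attempt to extract from $\mathcal G$ a stationary random rooted graph that inherits the ball-wise expansion, and then derive a contradiction via a structural hyperfiniteness result.

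Concretely, fix a sequence of radii $r_n\to\infty$ and basepoints $x_n\in\mathcal G$, and form the pointed graphs $(B(x_n,r_n), x_n^*)$ with $x_n^*$ chosen uniformly from $B(x_n,r_n)$. Since $\mathcal G$ has uniformly bounded degree, the Benjamini--Schramm compactness theorem yields, along a subsequence, a unimodular random rooted graph limit $(G,o)$. The expansion-at-all-scales hypothesis forces every $B(x_n,r_n)$ to satisfy a uniform $\varepsilon$-Cheeger inequality, and the key task is to transfer this lower bound to an intrinsic property of $(G,o)$ that is incompatible with hyperfiniteness.

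The central step is to establish that $(G,o)$ is hyperfinite as a unimodular random graph, paralleling the stationary hyperfiniteness result that is the technical heart of the paper's heat-kernel argument. Once hyperfiniteness is in hand, a standard argument produces exhaustions by finite subgraphs whose boundaries have asymptotically negligible expected size, in contradiction with any averaged Cheeger inequality inherited from the balls.

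The principal obstacle is twofold: (i) proving hyperfiniteness of the limit $(G,o)$ in the purely combinatorial/metric setting, and (ii) ensuring that metric-ball expansion actually survives the Benjamini--Schramm passage to the limit. In the heat-kernel setting one exploits the stationarity of the heat flow, which couples naturally to invariant measures on rooted graphs and makes non-expansion for stationary random graphs the right tool. In the metric-ball setting, by contrast, expansion is stipulated only on ambient balls whose centers need not correspond to any intrinsic feature of $(G,o)$, so the scales witnessing expansion can \emph{escape to infinity} and trivialize in the limit. Reconciling this mismatch is, to my understanding, the reason Benjamini's Conjecture \ref{conj:benjamini} remains open, while its heat-kernel analogue is the theorem settled in this paper.
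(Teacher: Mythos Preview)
The paper does not prove Conjecture~\ref{conj:benjamini}; it is stated as an open problem, and only the heat-kernel analogue (Theorem~\ref{thm:main}) is established. Your proposal correctly recognizes this and does not claim a complete argument, so in that sense there is nothing to compare against a ``paper's own proof''.

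That said, your diagnosis of obstacle~(i) understates the difficulty and slightly misidentifies where the parallel breaks. The technical heart of the paper is Corollary~\ref{cor:SRGareHyperfinite}: \emph{every} stationary random graph is stationary hyperfinite, proved via Poisson boundaries of graphings and the Connes--Feldman--Weiss theorem. The corresponding statement for unimodular random graphs is not merely hard to prove---it is \emph{false} (Example~\ref{ex:hyperfinite_tree}: regular trees are unimodular but not hyperfinite as unimodular random graphs). So the ``central step'' you propose, proving hyperfiniteness of the Benjamini--Schramm limit $(G,o)$, cannot be carried out from general structural principles the way Corollary~\ref{cor:SRGareHyperfinite} is. Worse, if the balls $B(x_n,r_n)$ genuinely form an expander family, then by the Elek/Schramm correspondence their Benjamini--Schramm limit is \emph{necessarily} non-hyperfinite, so your proposed step would amount to proving the conjecture itself rather than reducing to an independent structural fact. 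This is the essential asymmetry between the two problems: stationarity gives hyperfiniteness for free, unimodularity does not.

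Your obstacle~(ii) is real but secondary. The more fundamental point is that the heat-kernel setting supplies a canonical averaging (the random walk) that produces a \emph{stationary} limit, and stationarity is exactly what unlocks the amenability/hyperfiniteness machinery. Uniform measures on balls have no such compatibility with a Markov operator, so the limiting object one gets is only unimodular, and the argument stalls.
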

See also \cite{Benjamini1998} for an earlier statement for Cayley graphs and \cite{BenjaminiErdos} for a variant for families of finite graphs.

Explicit expander graphs were first constructed as finite quotients of Cayley graphs of groups $\Gamma$ with Property (T) (or $(\tau)$) by Margulis \cite{margulis_expander}. In this case, the Cayley graph of $\Gamma$ is the Gromov-Hausdorff limit of expander graphs, but nevertheless sometimes (and conjecturally, always) is not an expander at all scales (it can even be a tree). 

It is known that graphs with \textbf{metric property A} (see \cite{NowakAMM}) are never expanders at all scales: Brodzki-Niblo-{\v S}pakula-Willett-Wright proved Property A implies \textbf{uniform local amenability} (ULA for short) \cite{Brodzki} and it is immediate that a ULA graph is not an expander at all scales. Property A holds for Cayley graphs of \textbf{exact groups} (Ozawa \cite{OzawaICM}), which includes e.g. linear groups and hyperbolic groups. Without homogeneity of the underlying graphs, much less is known: Indeed, the closely related notion of a Schreier graph is essentially equivalent to a regular graph.

Our main purpose is to study the analogue of Conjecture \ref{conj:benjamini} that replaces metric balls by distributions of random walks as a measure of robustness. Let $\mathcal G$ be a connected rooted graph of bounded degree with a vertex $x$. We will write $\mu^n_x$ for the distribution of the $n$th step of the simple random walk starting at $x$ (i.e. the heat kernel).
\begin{dfn} Let $h>0$. We say the heat kernel on a rooted graph $(\mathcal G,o)$  is $h$\emph{-expanding} if for every $n$ and $A\subset \mathcal G$ with $\mu^n_o(A)\leq \frac{1}{2}$, we have 
	\begin{equation} 
	\mu^n_o(\partial A)\geq h \, \mu^n_o(A). 
	\label{eq:expansion}
	\end{equation}
We say that the heat kernel on a rooted graph $(\mathcal G,o)$ is \emph{expanding} if there exists $h>0$ such that it is $h$-expanding. Similarly we say that the heat kernels on an (unrooted) graph $\mathcal G$ are expanding if there exists $h>0$ such that for any choice of root $o\in \mathcal G$ the walk on $(\mathcal G,o)$ is $h$-expanding.
\label{dfn:expanding_walks}
\end{dfn}
\begin{rmk} 
\label{rmk:walks_vs_benjamini}
Expansion of the heat kernel is analogous to expanding at all scales: For a bounded degree graph $\mathcal G$, consider the family of measures $\mu_{p,r}$ (for $p\in \mathcal G$ and $r>0$) given by the uniform distribution on the ball $B(p;r)$. Then $\mathcal G$ is an expander at all scales precisely when there exists $h>0$ such that for every $p\in \mathcal G$, $r>0$ and $A\subset \mathcal G$ with $\mu_{p,r}(A)\leq \frac{1}{2}$, we have
	$$\mu_{p,r}(\partial A)\geq h \, \mu_{p,r}(A).$$
\end{rmk}
Our main result is the heat kernel analogue of Conjecture \ref{conj:benjamini}:
 \begin{thm} Let $\mathcal G$ be an infinite, connected, bounded degree graph. Then the heat kernel on $\mathcal G$ is not expanding.
\label{thm:main}
\end{thm}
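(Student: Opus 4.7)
I would argue by contradiction, following the route hinted at in the abstract: suppose $\mathcal G$ is an infinite, connected, bounded degree graph whose heat kernel is $h$-expanding for some fixed $h>0$, and manufacture a stationary random rooted graph on which this expansion cannot coexist with stationary hyperfiniteness. Fix a vertex $o\in \mathcal G$, let $X_n$ denote the simple random walk started at $o$, and consider the Ces\`aro averages
\[
\nu_N \;:=\; \frac{1}{N}\sum_{n=1}^{N} (\mathcal G, X_n)_{\ast}\bbP_o
\]
on the compact Polish space of isomorphism classes of rooted, bounded-degree graphs. Compactness yields a subsequential weak-$\ast$ limit $\nu$, which is stationary under the Markov shift $S$ that moves the root to a uniformly chosen neighbour. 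The hypothesis of $h$-expansion survives in the limit: for any Borel set $\mathcal A$ of rooted graphs, pulling back to $\{x\in \mathcal G : (\mathcal G,x)\in \mathcal A\}$ inside each factor of the average and invoking weak convergence gives that $\nu(\partial \mathcal A)\geq h\,\nu(\mathcal A)$ whenever $\nu(\mathcal A)\leq \tfrac12$, where $\partial \mathcal A$ consists of rooted graphs $(G,x)\in \mathcal A$ with at least one neighbour $y$ of $x$ satisfying $(G,y)\notin \mathcal A$.

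The technical heart of the argument is to prove that every stationary random graph is \emph{stationary hyperfinite}, that is, that for each $\varepsilon>0$ there is an equivariant measurable assignment $(G,x)\mapsto C_x$ of a finite cluster $C_x\subset G$ with $x\in C_x$, satisfying $y\in C_x\Leftrightarrow C_y=C_x$, such that the $\nu$-probability that the root has an incident edge leaving its cluster is less than $\varepsilon$. In the unimodular setting this is classical Schramm/Elek--Lyons-type hyperfiniteness, proved by greedily closing off partially built clusters with small relative boundary and accounting with the mass transport principle. The main obstacle, which I expect to be the crux of the proof, is to run such a greedy exhaustion in the purely stationary setting where mass transport is not available: one must exploit reversibility of the random walk with respect to the degree-biased distribution, and replace the symmetric mass-transport identity by a stationarity-based accounting of edge contributions, so that the expected number of inter-cluster edges incident to the root is directly controlled by the $\nu$-averaged Dirichlet form.

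Once stationary hyperfiniteness is in hand, the contradiction is immediate for any $\varepsilon<h/2$. Enlarge the probability space by attaching to each cluster an independent Bernoulli$(\tfrac12)$ label (equivariance is preserved because the label depends only on the cluster, not on the choice of root within it) and let $\mathcal A$ be the event that the root's cluster is labelled $1$. Then $\nu(\mathcal A)=\tfrac12$, while $\nu(\partial \mathcal A)$ is bounded above by the $\nu$-probability that the root is incident to another cluster, which is at most $\varepsilon<h/2=h\,\nu(\mathcal A)$, contradicting the $h$-expansion of $\nu$ obtained in the first step. Unwinding the Ces\`aro limit translates this into finite subsets of $\mathcal G$ that violate the heat-kernel expansion of $\mathcal G$ at arbitrarily large times along the chosen subsequence, which contradicts the standing assumption and proves the theorem.
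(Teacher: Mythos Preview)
Your overall architecture---pass to a stationary random graph via Ces\`aro averages of the walk, establish stationary hyperfiniteness, and extract a contradiction from the cluster partition---matches the paper. But two of your steps have genuine gaps.

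\emph{The contradiction step is not sound.} The claim that $\nu(\partial\mathcal A)\ge h\,\nu(\mathcal A)$ whenever $\nu(\mathcal A)\le\tfrac12$ does not follow from heat-kernel $h$-expansion of $\mathcal G$: for the pullback $A\subset\mathcal G$, the averaged condition $\nu_N(\mathcal A)\le\tfrac12$ does not force $\mu_o^n(A)\le\tfrac12$ term by term, so you cannot average the expansion inequality. What actually passes to the limit (the paper's Lemma~\ref{lem:GHlimit}) is that $\nu$-a.e.\ sample $(\mathcal G',o')$ has $h$-expanding heat kernel, because the $m$-step inequality depends only on $B(o',m)$. This distinction matters for your final step: your Bernoulli-labelled event $\mathcal A$ lives in an enlarged space and is not a deterministic subset of $\mathcal G$, and the clusters are measurable functions of the limit graph, so ``unwinding the Ces\`aro limit'' is ill-defined. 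The paper works entirely in the limit and never unwinds: for a.e.\ sample $(\mathcal G_x,x)$ it greedily collects clusters until their $\mu_x^n$-mass first exceeds $\tfrac12$, producing $S_x\subset\mathcal G_x$ with $\mu_x^n(S_x)\in(\tfrac13,\tfrac12]$ and small $\mu_x^n(\partial S_x)$---no Bernoulli randomisation needed. Even this requires care: Fatou and Borel--Cantelli are used to go from ``the $\nu$-probability that the root is a cluster-boundary vertex is small'' to ``$\mu_x^n(\text{cluster boundary})$ is small for a.e.\ $x$ and infinitely many $n$'', which your sketch treats as automatic.

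\emph{Stationary hyperfiniteness is not reached by greedy exhaustion.} Your remark that ``in the unimodular setting this is classical'' is misleading: unimodular random graphs are \emph{not} hyperfinite in general (regular trees are the standard counterexample), so there is no Schramm-type template to adapt. Stationary hyperfiniteness is strictly weaker (see Example~\ref{ex:hyperfinite_tree}), and the paper's proof is entirely non-combinatorial: realise the stationary random graph by a non-singular graphing, build its Poisson boundary as an extension (Proposition~\ref{prop:PoissonRSG}), show the orbit equivalence relation on the boundary is amenable (via amenability of the tautological bundle, Lemma~\ref{lem:TautER}, and a Zimmer-type quotient lemma, Lemma~\ref{lem:QuotientAMforER}), and then invoke Connes--Feldman--Weiss to convert amenability to hyperfiniteness. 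Reversibility with respect to the degree-biased measure and Dirichlet-form accounting do not substitute for this ergodic-theoretic machinery; the amenability in play is that of a non-singular equivalence relation, not a F\o lner condition.
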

We do not know whether the analogue of the above theorem holds for rooted graphs, especially since the rooted version of Benjamini's conjecture is false:
\begin{ex}[A rooted expander at all scales] Fix a prime $p$ and an expander family of $p$-congruence quotients $\{G_k\}_k$ of a finitely generated linear group $\Gamma$. Let $\mathcal G$ be the graph with vertex set $\sqcup_k G_k$ and edges given by those in $\{G_k\}_k$ as well as an edge between $x\in G_k$ and its image in $G_{k-1}$. Using that a definite proportion of the points of $\cup_{k\leq n} G_k$ is contained in $G_n$, is not hard to see that the family of metric balls centered at a fixed vertex $o\in\mathcal G$ are expanding.\end{ex}
In the above example, the probability that the $k$th step of the random walk on $(\mathcal G, o)$ lies in $G_n$ decays uniformly in $n$, so the subsets $\cup_{k\leq n} G_k$ have small boundary as measured by the random walk starting at a fixed root, which proves its non-expansion. So we pose the following:
\begin{question} Does there exist a bounded degree rooted graph $(\mathcal G, o)$ with expanding heat kernel? \end{question}
Since we do not know how to establish non-expansion of the heat kernel starting at any given root, a key idea in the proof of Theorem \ref{thm:main} is to sample the root randomly on $\mathcal G$. This leads us to study random walks on random graphs.

\subsection*{Stationary random graphs} \label{sec:stat_intro} First defined and studied by Benjamini-Curien \cite{BenCur}, a \emph{stationary random graph} is a random rooted graph whose distribution is invariant under rerooting at a uniformly random neighbor of the root (see Section \ref{sec:stationary} for details). We establish non-expansion of heat kernels on stationary random graphs:
\begin{thm}\label{thm:mainSt}
Let $(\mathcal G,o)$ be an infinite stationary random graph of bounded degree. Then the heat kernels on $(\mathcal G,o)$ is non-expanding almost surely.
\end{thm}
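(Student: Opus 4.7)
My approach combines three ingredients: the stationary hyperfiniteness of $(\mathcal G,o)$ that the abstract advertises as a key auxiliary result; the re-rooting stationarity that characterizes stationary random graphs; and the classical fact that on an infinite bounded-degree graph the diagonal heat kernel tends to zero. For each rational $h>0$ I aim to produce an almost-sure witness $(n,A)$ to failure of $h$-expansion, and then take a countable intersection over $h$.

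Fix $\varepsilon>0$. Using stationary hyperfiniteness, I pick a factor $\mathcal G\mapsto S=S_\varepsilon(\mathcal G)\subseteq V(\mathcal G)$ with $\bbP(o\in S)<\varepsilon$ and such that $\mathcal G\setminus S$ decomposes almost surely into finite components $\{C_i\}$; I will further assume that these components have size bounded by some $M=M(\varepsilon)$, if need be by further refinement of the decomposition. Because the shifted rooted process $(\mathcal G,X_n)$ has the same law as $(\mathcal G,o)$ and $\bbone_S$ is a factor,
\[ \bbE[\mu^n_o(S)] \;=\; \bbP(X_n\in S(\mathcal G)) \;=\; \bbP(o\in S) \;<\; \varepsilon \]
uniformly in $n$. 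Fatou and Markov then give, with probability at least $1-\sqrt\varepsilon$, infinitely many $n$ with $\mu^n_o(S)\leq 2\sqrt\varepsilon$.

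For such an $n$ chosen sufficiently large, I construct $A$ greedily from the components of $\mathcal G\setminus S$: sort them by $\mu^n_o$-mass in decreasing order and add them to $A$ one by one, stopping just before the total would exceed $1/2$. By construction $\partial A\subseteq S$ and $\mu^n_o(A)\geq 1/2 - \max_i \mu^n_o(C_i)$. To control the last term I use the reversibility identity $p_{2n}(o,o) = \pi(o)\sum_x p_n(o,x)^2/\pi(x)$, which together with $\pi(x)/\pi(o)\leq \Delta$ yields $\max_x p_n(o,x)\leq \sqrt{\Delta\, p_{2n}(o,o)}$. Since simple random walk on an infinite bounded-degree graph is null recurrent or transient, $p_{2n}(o,o)\to 0$, and $|C_i|\leq M$ then forces $\max_i \mu^n_o(C_i)\to 0$. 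So for $n$ large, $\mu^n_o(A)\geq 1/4$, and on the good event the ratio $\mu^n_o(\partial A)/\mu^n_o(A)\leq 8\sqrt\varepsilon$.

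To upgrade from high probability to almost sure, I apply the construction with $\varepsilon_k=k^{-4}$; then $\sum_k\sqrt{\varepsilon_k}<\infty$, so Borel--Cantelli yields almost surely an infinite sequence of witnesses $(n_k,A_k)$ with expansion ratio $O(k^{-2})\to 0$, proving non-expansion. I expect the main obstacle to lie in the structural strength of the hyperfinite decomposition: the argument above needs bounded (or at least diameter-bounded) components, and if the authors' stationary hyperfiniteness result only yields finite but possibly unbounded components, additional work will be required, either to strengthen it or to iterate the decomposition inside large components. A secondary technical point is bipartite periodicity in the on-diagonal decay, easily handled by passing to even times or to the lazy walk.
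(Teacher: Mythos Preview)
Your proposal is correct and follows essentially the same route as the paper: stationary hyperfiniteness, stationarity to control $\bbE[\mu_o^n(S)]$, Fatou plus Markov to pass to a good set, greedy assembly of components into a half-sized $A$, uniform heat-kernel flattening to control the last component, and Borel--Cantelli with $\varepsilon_k=k^{-4}$. On your flagged concern, the paper does obtain \emph{bounded} component size (their $M$): it passes through the graphing realization of the stationary random graph and an elementary truncation of the hyperfinite exhaustion (their Lemma~\ref{lem:FiniteComponents1}), so your assumption is justified; the only cosmetic differences are that the paper works in the graphing language throughout, proves $\max_v p_n(o,v)\to 0$ by a direct argument (Lemma~\ref{lem:ProbDecay}) rather than via reversibility and $p_{2n}(o,o)\to 0$, and uses the interior-boundary convention, which costs an extra factor of $d$ (they bound $\mu_o^n(\partial\mathcal F)$ by $d\,\bbP(o\in\mathcal E)$ via one step of the walk) where your exterior-boundary reading gives $\partial A\subseteq S$ directly.
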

The proof relies on a number of results on the general structure of stationary random graphs that could be of independent interest: First, we establish a connection with the theory of measured equivalence relations. Indeed, given a measured equivalence relation, any graphing of it yields a random graph and in fact any stationary random graph is obtained in this way (see Section \ref{sec:stat_graphing} for details). This allows us to construct \emph{Poisson boundaries} for stationary random graphs, and we show these are amenable (Proposition \ref{prop:PoissonRSG}). Using the equivalence of amenability and \emph{hyperfiniteness} for non-singular equivalence relations, we deduce that any stationary random graph is hyperfinite (Corollary \ref{cor:SRGareHyperfinite}). Note this is in sharp contrast with the behavior of unimodular random graphs, where hyperfiniteness often fails, e.g. for trees (see Example \ref{ex:hyperfinite_tree}). Hyperfiniteness of stationary random graphs is the key ingredient in the proof of Theorem \ref{thm:mainSt}. In fact, we prove a stronger statement about hyperfiniteness of certain sequences of weighted graphs.

The notion of hyperfiniteness for families $(\mathcal G_i)_i$ of finite graphs was introduced by Elek in \cite{Elek2006}. Informally, this means that each $\mathcal G_i$ can be cut into uniformly bounded pieces with small boundaries by removing an arbitrarily small proportion of vertices. Hyperfiniteness is strictly stronger than non-expansion and in many ways is the correct analogue of amenability in the graph setting. The strengthened version of Conjecture \ref{conj:benjamini} asserts that any bounded degree graph contains a sequence of balls that is hyperfinite.


Hyperfiniteness can be similarly defined for weighted graphs (see \cite{ElekTimar}).  
Our Corollary \ref{cor:SRGareHyperfinite} can then be rephrased as follows: For any stationary random graph $(\mathcal G,o)$, the sequence of graphs $(\mathcal G,o)$ weighted by $\mu_o^n$ is hyperfinite. To deduce Theorem \ref{thm:mainSt}, we use hyperfiniteness to show that with large probability, one has a partition into finite pieces with small boundary. From these partitions we assemble large sets with small boundary that witness the non-expansion of the heat kernels.

\subsection{Outline of the paper} We start by recalling the construction of the Poisson boundary of a group, which enables us to prove Main Theorem \ref{thm:main} in the special case of Cayley graphs (Section \ref{sec:poisson}). Next, we discuss stationary random graphs and graphings (Section \ref{sec:stationary}). In Section \ref{sec:poissonSRG} we construct Poisson boundaries of stationary random graphs, prove they are amenable, and deduce that any stationary random graph is hyperfinite. Finally, in Section \ref{sec:main_proof}, we prove our main theorems, first for stationary random graphs (Theorem \ref{thm:mainSt}) and then for bounded degree graphs (Theorem \ref{thm:main}). 
\subsection*{Acknowledgments} The authors thank Gabor Elek for explaining the connection between the main theorem and Property A. We thank Alex Furman for helpful conversations on amenability of Poisson boundaries. We thank Miklos Abert and Itai Benjamini for valuable comments on an earlier version of this paper. We thank the University of Illinois at Chicago for providing support for a visit by MF. MF is supported by ERC Consolidator Grant 648017. WvL is supported by NSF DMS-1855371 and an AMS-Simons Travel Grant.

\section{Poisson boundaries and random walks on groups} \label{sec:poisson}
\subsection{Poisson boundary}
We start by recalling a few basic facts about Poisson boundaries. Let $G$ be a countable group and let $\mu$ be a probability measure on $G$ such that $\supp(\mu)$ generates $G$. The \textbf{path space} is the product space $G^\bbN$. Let us write $\bbP^\mu$ for the measure on $G^\bbN$ obtained as push-forward of $\mu^\bbN$ via the map 
	$$(x_1,x_2,\ldots)\mapsto (1, x_1,x_1x_2,\ldots).$$
We define the shift operator $T:G^\bbN\to G^\bbN$ as $T((x_i)_{i\in\bbN})=(x_{i+1})_{i\in\bbN}$. The shift operator commutes with the action of $G$ by left-multiplication. The \textbf{Poisson boundary} $(P,\tau)$ is the quotient of the space $(G^\bbN,\bbP^\mu)$ by the $\sigma$-algebra of $T$-invariant sets. Since the support of $\mu$ generates $G$, one can easily verify that the induced action of $G$ on $(P,\tau)$ is non-singular and that the measure $\tau$ is $\mu$-stationary. It is a theorem of Zimmer \cite{Zimmer78} that the action of $G$ on the Poisson boundary is amenable (in the sense of \cite{Zimmer78}). We reproduce the sketch of Zimmer's proof since later we will need to perform a similar argument to prove the amenability of Poisson boundaries of stationary graphings. The key ingredient of Zimmer's argument is the following lemma.
\begin{lem}[{\cite[Theorem 3.3]{Zimmer78}}]\label{lem:QuotientAm} Let $(X,\nu_1)$ be a probability measure space with a non-singular amenable action of the group $G_1\times G_2$ such that $G_2$ is amenable. Let $\mathcal A$ be the $\sigma$-algebra of $G_2$ invariant sets on $X$. Then the quotient of $(X,\nu_1)$ by $\mathcal A$ is a non-singular amenable $G_1$-space.
\end{lem}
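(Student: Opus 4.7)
The plan is to use Zimmer's equivariant-section characterization of amenability: an action of a group $H$ on a standard probability space $(Y,\nu)$ is amenable precisely when, for every compact convex subset $K$ of a separable Banach space and every measurable Borel cocycle $\alpha: H \times Y \to \mathrm{Homeo}_{\mathrm{aff}}(K)$ into continuous affine transformations, there exists a measurable $\alpha$-equivariant section $Y \to K$. Given such a cocycle $\alpha$ for the $G_1$-action on $Y := X/\mathcal{A}$, the goal is to produce such a section.

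First I would pull $\alpha$ back along the quotient map $\pi: X \to Y$ to a cocycle $\widetilde{\alpha}(g_1, x) := \alpha(g_1, \pi(x))$ for the $G_1$-action on $X$, and then extend it to a $(G_1 \times G_2)$-cocycle $\widehat{\alpha}$ on $X$ by declaring $G_2$ to act trivially, i.e. $\widehat{\alpha}((g_1, g_2), x) := \alpha(g_1, \pi(x))$. The cocycle identity is immediate, since the $G_1$-action on $X$ descends to $Y$ and $G_2$ preserves every $\mathcal{A}$-class and hence acts trivially on $Y$. Next I would invoke the hypothesis that the $(G_1 \times G_2)$-action on $X$ is amenable to produce an $\widehat{\alpha}$-equivariant measurable section $s: X \to K$. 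Restricting the equivariance equation to $\{e\} \times G_2$ gives $s(g_2 x) = s(x)$ almost everywhere, so $s$ is $G_2$-invariant. Standard descent then provides a measurable $\bar{s}: Y \to K$ with $\bar{s} \circ \pi = s$ almost everywhere, and restricting the equivariance equation to $G_1 \times \{e\}$ shows that $\bar{s}$ is the desired $\alpha$-equivariant section, completing the proof of amenability of the $G_1$-action on $Y$.

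The main obstacle I anticipate is measure-theoretic rather than conceptual: one must ensure that the quotient $(Y,\nu)$ arising from the $\sigma$-algebra $\mathcal{A}$ is a well-behaved (essentially standard Borel) measure space on which the cocycle framework applies, and that the passage from ``holds almost everywhere on $X$'' to ``holds almost everywhere on $Y$'' is legitimate for both $G_2$-invariance and descent of the equivariance identity. This is precisely where the amenability hypothesis on $G_2$ enters: it ensures that the $G_2$-orbit equivalence relation on $X$ is sufficiently tame (amenable in the sense of Zimmer) that the quotient by $\mathcal{A}$ is a standard probability space and that $G_2$-invariant measurable maps out of $X$ descend to measurable maps out of $Y$ in the manner required by the argument.
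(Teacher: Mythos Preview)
The paper does not actually prove this lemma; it is quoted as \cite[Theorem 3.3]{Zimmer78} and the surrounding text only sketches how the lemma is \emph{applied} to establish amenability of the Poisson boundary. So there is no proof in the paper to compare against beyond the citation itself.

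Your argument is essentially Zimmer's own: pull the affine cocycle back through $\pi$, extend it trivially over $G_2$, invoke amenability of the $G_1\times G_2$-action on $X$ to produce an equivariant section, observe that trivial $G_2$-equivariance forces the section to be $G_2$-invariant, and descend to $Y$. This is correct and is the standard route.

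Your final paragraph, however, misidentifies where the amenability of $G_2$ enters. Neither of the steps you flag actually uses it: the Mackey point realization of the quotient by any sub-$\sigma$-algebra of a standard probability space is again standard, and an almost-everywhere $G_2$-invariant measurable map into a standard Borel space is automatically $\mathcal A$-measurable (after modification on a null set) and hence descends through $\pi$, with no hypothesis on $G_2$ required. Indeed, in the product situation $G=G_1\times G_2$ your argument runs to completion without ever invoking amenability of $G_2$, which is why you could not locate a natural place for it. The hypothesis is relevant in Zimmer's broader framework (extensions rather than products, converse directions, or the variant with $G_2=\bbN$ that the paper actually uses), but for the statement exactly as recorded here the argument you wrote simply does not need it. This is not a gap in your proof---it is a misattribution in your commentary.
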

Actually, Zimmer uses a version of Lemma \ref{lem:QuotientAm} where $G_2=\bbN$ is generated by a single transformation (see \cite[Section 5]{Zimmer78} for details). In order to use this to prove amenability of the Poisson boundary of $G$, we must first replace $\bbP^\mu$ with a quasi-invariant measure which makes the left $G$-action on $G^\bbN$  amenable. For any probability measure $\nu$ on $G$ the convolution $\nu*\bbP^\mu$ is the push-forward of the measure $\nu\times \mu^\bbN$ via the map
$(g,x_1,x_2,\ldots)\mapsto (g,gx_1,gx_1x_2,\ldots).$ 
We claim that as soon as $\nu$ has full support on $G$ then the action $G\curvearrowright (G^\bbN,\nu*\bbP^\mu)$ is non-singular and amenable. Non-singularity is obvious and to prove amenability, note that the projection onto the first coordinate $(G^\bbN,\nu*\bbP^\mu)\to (G,\nu)$ is a $G$-equivariant measure-preserving map and $G$ acts amenably on $(G,\nu)$. Therefore, by \cite[2.4]{Zimmer78} the action $G\curvearrowright (G^\bbN, \nu*\bbP^\mu)$ is amenable. 

Let $\mathcal A$ be the algebra of $T$-invariant sets on $G^\bbN$. The quotient of $(G^\bbN,\nu*\bbP^\mu)$ by $\mathcal A$ is $(P,\nu*\tau)$. By the variant of Lemma \ref{lem:QuotientAm} for $G_2=\bbN$, the $G$-action on $(P,\nu*\tau)$ is amenable. It remains to produce $\nu$ with full support such that $\nu*\tau=\tau$, for which we can take the sum of convolutions $\nu:=\sum_{n\geq 1} \frac{1}{2^{n+1}} \mu^{\ast n}$. This concludes the proof of the amenability of $G\curvearrowright(P,\tau)$.

We will need one more well-known property of $(P,\tau)$, namely an ergodic theorem for the random walk:
\begin{lem}
For any  $f\in L^\infty(P,\tau)$ and $\tau$-a.e. $x\in P$, we have $\displaystyle\lim_{n\to\infty} \int_G f(gx)\, d\mu^{\ast n}(g)=\int_P f(y) \, d\tau(y).$
\label{lem:ConvConv}
\end{lem}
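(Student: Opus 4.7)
The plan is to apply the martingale convergence theorem on the path space $(G^\bbN,\bbP^\mu)$ to the pullback of $f$ along the boundary projection $\pi\colon G^\bbN\to P$, and then transfer the resulting pathwise convergence into an $L^2(\tau)$ statement via Fubini. A final step promotes this $L^2$-convergence to pointwise convergence.

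Set $\tilde f := f\circ\pi$; this is a bounded $T$-invariant function on the path space. With the filtration $\mathcal F_n:=\sigma(X_0,\dots,X_n)$, the Markov property of the walk together with $T$-invariance of $\tilde f$ gives
\[ \bbE[\tilde f\mid\mathcal F_n](\omega)\;=\;\int_P f\bigl(X_n(\omega)\,y\bigr)\,d\tau(y), \]
since given $X_n$ the shifted path $T^n\omega$ is distributed as $X_n\cdot\omega'$ with $\omega'\sim\bbP^\mu$ and $\pi$ is $G$-equivariant. Doob's forward martingale convergence theorem therefore yields $\int_P f(X_n y)\,d\tau(y)\to f(\pi(\omega))$ $\bbP^\mu$-a.s. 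Applying the same identity to $f^2\in L^\infty(P,\tau)$ and expanding $|f(X_n y)-f(\pi(\omega))|^2$ under the $d\tau(y)$-integral, the three resulting summands all tend to $f(\pi(\omega))^2$ (with the middle one with coefficient $-2$), so
\[ \int_P \bigl|f(X_n y)-f(\pi(\omega))\bigr|^2\,d\tau(y)\;\xrightarrow[n\to\infty]{}\;0\quad \bbP^\mu\text{-a.s.} \]

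Taking $\bbP^\mu$-expectations (by bounded convergence), swapping integrals by Fubini, and applying Jensen's inequality, using $\bbE_\omega f(X_n y) = \int_G f(gy)\,d\mu^{\ast n}(g)$ and $\bbE_\omega f(\pi(\omega)) = \int_P f\,d\tau$, one obtains
\[ \int_P \Big|\int_G f(gy)\,d\mu^{\ast n}(g) - \int_P f\,d\tau\Big|^2\,d\tau(y)\;\le\;\int_P \bbE_\omega \bigl|f(X_n y)-f(\pi(\omega))\bigr|^2\,d\tau(y)\;\longrightarrow\;0. \]
Hence the sequence $T_\mu^n f(y) := \int_G f(gy)\,d\mu^{\ast n}(g)$ converges to $\int_P f\,d\tau$ in $L^2(\tau)$, and in particular $\tau$-a.e.\ along a subsequence.

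The main obstacle is to upgrade this subsequential a.e.\ convergence to convergence along the full sequence for any fixed $f$. For this I would invoke that $T_\mu$ is a positive contraction on $L^2(\tau)$ with $T_\mu 1=1$ and $\tau$-invariant, and that by maximality of the Poisson boundary the associated $(P,\tau)$-Markov chain has trivial tail $\sigma$-algebra. Standard pointwise ergodic theory for Markov operators with trivial tail then yields $T_\mu^n f\to \int_P f\,d\tau$ $\tau$-a.e., completing the proof.
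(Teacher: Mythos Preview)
Steps 1--6 are correct and yield $T_\mu^n f\to\int_P f\,d\tau$ in $L^2(\tau)$. The gap is step~7. Pointwise ergodic theorems for Markov operators give almost-everywhere convergence of Ces\`aro averages $\tfrac{1}{n}\sum_{k<n}T_\mu^k f$, not of the iterates $T_\mu^n f$ themselves; triviality of the tail of the induced chain on $P$ (which would itself need an argument---``maximality'' of the Poisson boundary is a statement about $\mu$-boundaries of $G$, not immediately about the tail $\sigma$-algebra of the chain on $P$) is equivalent to exactness, hence to $L^2$-convergence of $T_\mu^n f$, but not to a.e.\ convergence. So your final step only re-derives what you already had, and the upgrade from $L^2$ to a.e.\ remains unjustified. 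You yourself flag this as the ``main obstacle'', and the theorem you invoke to resolve it does not exist in the form stated.

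The paper's route sidesteps this entirely. Rather than computing $\bbE[\tilde f\mid\mathcal F_n]$ as an integral over the boundary variable $y$ (your step~2) and then trying to swap the roles of $\omega$ and $y$, it uses shift-invariance of $\tilde f$ to \emph{prepend} $n$ fresh $\mu$-increments $g_1,\dots,g_n$ to a fixed trajectory $(x_i)$ representing $x=\pi((x_i))$:
\[
\int_G f(gx)\,d\mu^{*n}(g)\;=\;\int_{G^n}\tilde f\bigl(1,\,g_1,\,\dots,\,g_1\cdots g_n,\,g_1\cdots g_n\,x_1,\,g_1\cdots g_n\,x_2,\dots\bigr)\,d\mu(g_1)\cdots d\mu(g_n).
\]
Now the $\mu^{*n}$-randomness sits in the first $n{+}1$ coordinates of a path whose law on those coordinates agrees with $\bbP^\mu$, so the right-hand side is compared directly with $\int_{G^n}\bbE\tilde f(1,g_1,\dots,g_1\cdots g_n)\,d\mu^n=\bbE\tilde f$, and martingale convergence on the path space gives the conclusion for $\bbP^\mu$-a.e.\ $(x_i)$, hence for $\tau$-a.e.\ $x$. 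The point is that by arranging the random walk steps as the \emph{initial} segment of the path, the desired integral is already a conditional expectation and a.e.\ convergence falls out without any $L^2$ intermediary or Markov-operator ergodic theorem.
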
 We provide a short proof:
\begin{proof}
Let $\tilde f$ be the pullback of $f$ to $G^\mathbb N$. The lemma will follow once we show that for $\bbP^\mu$-almost every trajectory $(x_i)_{i\in\bbN}$, we have 
$$\lim_{n\to\infty}\int_G \tilde f((gx_i)_{i\in \bbN})d\mu^{\ast n}(g)=\bbE(\tilde f).$$
Because $\tilde f$ is shift-invariant, we can rewrite the left-hand side as 
\begin{align}\label{eq:Shift}
\lim_{n\to\infty}\int_{G^n} \tilde f(1,g_1,g_1g_2,\ldots, g_1\ldots g_n,g_1\ldots g_nx_1,g_1\ldots g_nx_2,\ldots) \\ d\mu(g_1)\ldots d\mu(g_n).\end{align}
By the martingale convergence theorem, we have that for $\bbP^\mu$-almost every trajectory $(y_i)_{i\in\bbN}\in G^\bbN$:
 $$\tilde{f}((y_i)_{i\in\bbN})=\lim_{n\to\infty}\mathbb E(\tilde f((y_i')_{i\in\bbN} \mid y_i'=y_i \textrm{ for } i\leq n)).$$
Since the expected value on the right depends only on $y_1,\ldots, y_n$ we will write $\bbE \tilde f(y_1,\ldots,y_n):=\bbE(\tilde f((y_i')_{i\in\bbN}) \mid y_i'=y_i \textrm{ for } i\leq n))$. Using (\ref{eq:Shift}) we now can deduce that for almost every trajectory $(x_i)_{i\in\bbN}$ we have
\begin{align*}\lim_{n\to\infty}\int_G \tilde f((gx_i)_{i\in \bbN})d\mu^{\ast n}(g)=&\lim_{n\to\infty}\bbE \tilde{f}(1,g_1,\ldots,g_1g_2\ldots g_n)d\mu(g_1)\ldots d\mu(g_n)\\=&\bbE(\tilde f).\end{align*}
\end{proof}
\subsection{Global non-expansion of heat kernels on groups} We are now able to prove the main result in the special case of Cayley graphs of finitely generated groups.
\begin{thm} 
\label{thm:mainGp}
Let $G$ be a finitely generated group and $\mu$ a finitely supported measure on $G$ whose support generates $G$ as a semigroup. Then the heat kernels on $G$ with distribution $\mu$ are not expanding.
\end{thm}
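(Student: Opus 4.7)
The plan is to construct, for every $\varepsilon>0$, a subset $A\subset G$ and a time $n$ such that $\mu^{*n}_e(A)$ is bounded away from $0$ and $1$ while $\mu^{*n}_e(\partial A)<\varepsilon$, which contradicts $h$-expansion for any $h>0$. The set $A$ will be the pullback of a subset of the Poisson boundary under an orbit map $g\mapsto gx_0$ for a $\tau$-typical basepoint $x_0\in P$.

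Fix a measurable $B\subset P$ with $\tau(B)=1/2$; this exists whenever $(P,\tau)$ is non-trivial. (The trivial case, which corresponds to the Liouville property of $(G,\mu)$, will be dealt with separately, e.g.\ by a direct F\o lner-type construction on $G$.) Choose a $\tau$-typical $x_0\in P$ and set $A:=\{g\in G:gx_0\in B\}$. Lemma \ref{lem:ConvConv} applied to $\bbone_B$ gives $\mu^{*n}_e(A)=\int\bbone_B(gx_0)\,d\mu^{*n}(g)\to 1/2$; applied with starting point $sx_0$ (still $\tau$-typical by non-singularity of the action) it yields $\mu^{*n}_e(As^{-1})=\int\bbone_B(gsx_0)\,d\mu^{*n}(g)\to 1/2$ for every $s\in\supp(\mu)$, where one computes $As^{-1}=\{g:gsx_0\in B\}$. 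The vertex boundary in the Cayley graph satisfies $\partial A\subseteq\bigcup_{s\in\supp(\mu)}(A\setminus As^{-1})$, so it suffices to show
\[
\int_G\bigl|\bbone_B(gx_0)-\bbone_B(gsx_0)\bigr|\,d\mu^{*n}(g)\xrightarrow[n\to\infty]{}0,
\]
i.e.\ that the joint distribution of $(gx_0,gsx_0)$ under $g\sim\mu^{*n}$ concentrates on the diagonal of $P\times P$.

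The main obstacle is this diagonal concentration: it is strictly stronger than the marginal equidistribution afforded by Lemma \ref{lem:ConvConv}, since for independent marginals the displayed $L^1$ distance would tend to $2\tau(B)(1-\tau(B))=1/2$. The required concentration is a consequence of the convergence-to-boundary property intrinsic to Poisson boundaries: for $\bbP^\mu$-a.e.\ sample trajectory $(h_n)_{n\in\bbN}$, the pushforward measures $(h_n)_*\tau$ converge weakly to a Dirac mass $\delta_\omega$ at the trajectory's limit $\omega$; the same holds for $(h_ns)_*\tau=(h_n)_*(s_*\tau)$ since $s_*\tau\ll\tau$. Averaging the $L^1$ distance over $x_0\sim\tau$ and invoking Fubini converts it into $\bbE_{h_n\sim\mu^{*n}}[\tau(h_n^{-1}B\triangle(h_ns)^{-1}B)]$, which vanishes because both $h_n^{-1}B$ and $(h_ns)^{-1}B$ concentrate their $\tau$-mass on $\bbone_B(\omega)\in\{0,1\}$ for a.e.\ trajectory. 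Picking $x_0$ in the resulting full-measure set of good basepoints (passing to a subsequence of $n$'s if necessary) gives $\mu^{*n}_e(\partial A)/\mu^{*n}_e(A)\to 0$, completing the argument.
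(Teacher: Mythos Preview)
Your argument is correct and takes a genuinely different route from the paper's. In the non-trivial boundary case, the paper invokes amenability of the $G$-action on $(P,\tau)$ (Zimmer) together with the Jones--Schmidt theorem to produce an \emph{asymptotically invariant} sequence $S_n\subset P$ with $\tau(S_n)=\tfrac12$ and $\tau(sS_n\triangle S_n)\to 0$ for each $s\in\supp(\mu)$; pulling these back along an orbit map and applying Lemma~\ref{lem:ConvConv} immediately gives the desired sets in $G$. You instead fix an \emph{arbitrary} $B$ with $\tau(B)=\tfrac12$ and extract the smallness of the boundary from the $\mu$-boundary property of $(P,\tau)$: martingale convergence gives $\tau(h_n^{-1}B)\to\bbone_B(\omega)\in\{0,1\}$ along a.e.\ trajectory, and since $s_*\tau\ll\tau$ the same limit holds for $\tau(s^{-1}h_n^{-1}B)=(s_*\tau)(h_n^{-1}B)$ by absolute continuity of the integral; the limit being $0$ or $1$ forces $\tau(h_n^{-1}B\triangle s^{-1}h_n^{-1}B)\to 0$, and averaging over $x_0$ plus Fatou selects a good basepoint. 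Your approach is more elementary in that it bypasses both Zimmer's amenability theorem and Jones--Schmidt, using only martingale convergence and non-singularity; the paper's approach, on the other hand, is the template that generalises to stationary graphings in Section~\ref{sec:poissonSRG}, where hyperfiniteness of the Poisson bundle plays the role of Jones--Schmidt. Two minor remarks: the phrase ``converge weakly to $\delta_\omega$'' undersells what you actually use, since you need convergence on the fixed Borel set $B$, which is the martingale statement rather than weak-$*$ convergence; and the Liouville case is only gestured at---the paper fills in the details via translates of F{\o}lner sets and the dispersion Lemma~\ref{lem:ProbDecay}.
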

\begin{proof}[Proof of Theorem \ref{thm:mainGp}] Let $(P,\tau)$ be the Poisson boundary of $(G,\mu)$. By the above results of Zimmer \cite{Zimmer78}, the action of $G$ on $(P,\tau)$ is nonsingular, ergodic, and amenable. The idea of the proof is that amenability of the Poisson boundary implies there exist medium-size sets in $P$ that are nearly invariant under the $G$-action. This allows us to define sets in $G$ that are nearly invariant under the random walk. 

The above idea works well as long as the Poisson boundary is nontrivial, so let us first consider the case that  $(P,\tau)$ is trivial. Then $G$ is amenable, so for every  $\varepsilon>0$ there exists a F{\o}lner set $F_\varepsilon$ such that $|\partial F_\varepsilon|\leq \varepsilon|F_\varepsilon|$.  Using the random walk starting at $e\in G$, we have for every $n$:
	$$\sum_{g\in G} \mu_e^n(gF_\varepsilon)= |F_\varepsilon| \qquad \textrm{ and } \qquad  \sum_{g\in G} \mu_e^n(\partial gF_\varepsilon)=|\partial F_\varepsilon|\leq \varepsilon|F_\varepsilon|. $$
It follows that for every $n$ there exists $g_n$ such that $\mu_e^n(\partial g_n F_\varepsilon)\leq \varepsilon \, \mu_e^n(g_n F_\varepsilon)$. In Lemma \ref{lem:ProbDecay} we establish a dispersion property of random walks, immediately implying that for sufficiently large $n$ (depending on $\varepsilon$), we have $\mu_e^n(g_n F_\varepsilon)\leq \frac{1}{2}$. This proves that heat kernels on $G$ are not $\varepsilon$-expanding.

The remaining case is that $(P,\tau)$ is nontrivial. Then a result of Jones-Schmidt \cite{JonesSchmidt} shows that there exists an almost invariant sequence $\{S_n\}_n$ of subsets of $P$, i.e. we have $\tau(\gamma S_n \Delta S_n)\to 0$ for any $\gamma\in\text{supp}(\mu)$. It follows that $\tau(\partial S_n)\to 0$. Further, Jones-Schmidt in fact show we can choose $S_n$ with $\tau(S_n)=\frac{1}{2}$. 

For $p\in P$ and $n\geq 1$, define
	$$A_n(p):= \{\gamma\in G\mid \gamma p \in S_n\}.$$
We claim that for a.e. $p$, these sets $A_n$ are nearly invariant under the random walk. Indeed, by Lemma \ref{lem:ConvConv}, we have for almost every $p\in P$:
	\begin{align*}
	\lim_{k\to\infty} \mu^k(A_n(p))	&=\int_G \bbone_{S_n}(\gamma p) \mu^k(\gamma)\\
							&=\int_P \bbone_{S_n}(p) d\tau(p)\\
							&=\tau(S_n),
	\end{align*}
where we used Lemma \ref{lem:ConvConv} on the second line. Similarly one sees $\mu^k(\partial A_n(p))\to \tau(\partial S_n)$ as $k\to\infty$ for every $n\geq 1$ and a.e. $p$.\end{proof}

\section{Stationary random graphs and graphings}\label{sec:stationary}
\subsection{Stationary random graphs}\label{sec:SRG}
Let $d\in\bbN$ and let $\mathcal M_{\leq d}$ be the moduli space of connected rooted graphs of degree bounded by $d$. We allow multiple edges and loops. The space $\mathcal M_{\leq d}$ is equipped with the following metric. For any rooted graphs $(\mathcal G_1,o_1),(\mathcal G_2,o_2)$ we put 
	$$d((\mathcal G_1,o_1),(\mathcal G_2,o_2)):=2^{-r} \textrm{ where } r=\inf\{n\in\bbN \mid B_{\mathcal G_1}(o_1,n)\not\cong B_{\mathcal G_2}(o_2,n)\}.$$
This metric induces the Gromov-Hausdorff topology on $\mathcal M_{\leq d}$. For a rooted graph $(\mathcal G,o)\in \mathcal M_d$, write $(X_n)_{n\in\bbN}$ for the simple random walk on $\mathcal G$ starting at the root. 
\begin{dfn}[{Benjamini-Curien \cite{BenCur}}] A \emph{stationary random graph} is a random $\mathcal M_d$-valued variable $(\mathcal G,o)$ such that $(\mathcal G,o)$ and $(\mathcal G,X_n)$ have the same distribution.\end{dfn}
For example, a Cayley graph rooted at any vertex is a stationary random graph. We have the following definition of hyperfiniteness for stationary random graphs:
\begin{dfn} A stationary random graph $(\mathcal G, o)$ is \emph{stationary hyperfinite} if for every $\varepsilon>0$, there exists a stationary random subset $S\subset \mathcal G$ with $\bbP(o\in S)\leq \varepsilon$ and such that $\mathcal G\backslash S$ is a union of finite connected components almost surely.\end{dfn}
This should be compared with hyperfiniteness of unimodular random graphs (where $S$ is required to be invariant). In particular, we warn that a unimodular random graph may be stationary hyperfinite yet not hyperfinite as a unimodular random graph:
\begin{ex} For $d>1$, the $2d$-regular rooted tree $(T_{2d},o)$ is a unimodular random graph that is not hyperfinite, but it is stationary hyperfinite because a union of concentric horospheres with a sparse sequence of radii, centered at a random point of the boundary, will partition $T_{2d}$ into finite components. 

More precisely, identify $T_{2d}$ with the Cayley graph of the free group $F_d$ on $d$ generators and let $b\colon\partial T\times F_d\to \bbZ$ be the Busemann cocycle. Choose $\alpha\in \bbR$ irrational and let $\lambda$ the Lebesgue measure on $\bbT:=\bbZ\backslash \bbR$. Consider the system $(\partial T\ltimes \bbT,\nu\times \lambda)$ with the action $(\xi,\theta)\gamma=(\xi\gamma, \theta+\alpha b(\xi,\gamma))$ for $\gamma\in F_d.$ Let $E_\varepsilon=\partial T\times [0,\varepsilon)$ and put $S_\varepsilon=\{\gamma\in F_d \mid x\gamma\in E_\varepsilon\}$ where $x$ is $(\nu\times\mu)$-random. We have $\bbP (o\in S_\varepsilon)=\varepsilon$ and it is easy to verify that $S_\varepsilon$ is always a union of concentric horospheres with bounded gaps so that $T_{2d}\setminus S_{\varepsilon}$ is a union of finite connected components. Since $\nu\times\lambda$ is $F_d$-stationary, $S_\varepsilon$ is  the intersection of $E_\varepsilon$ with the orbit of a random point in a stationary $F_d$-system, so its distribution is stationary under the action of $F_d$. In particular, it is a stationary random subset of $T_{2d}$.
\label{ex:hyperfinite_tree}
\end{ex}


\subsection{Stationary graphings}\label{sec:stat_graphing}
One way to produce random graphs is to realize them as equivalence classes in some non-singular measured equivalence relation $(X,\nu,\mathcal R)$ where edges are given by a finite graphing $(\varphi_i)_{\in I}$ generating $\mathcal R$. Thanks to this  point of view we will be able to attack our problem using the theory of non-singular measured equivalence relations. 

Let $(X,\nu)$ be a probability measure space and let $\varphi_i:X_i\to X$ be a finite family of non-singular measurable maps defined on subsets $U_i$ of $X$. The triple $(X,\nu,(\varphi_i)_{i\in I})$ is called a \textbf{graphing}. We assume that $(\varphi_i)_{i\in I}$ is \textbf{symmetric}, i.e. for each $i\in I$ the map $\varphi_i^{-1}\colon \varphi_i(U_i)\to U_i$ is also in the set $(\varphi_i)_{i\in I}$.  Let $\mathcal R$ be the orbit equivalence relation generated by maps $(\varphi_i)_{i\in I}$. For $x\in X$ define its degree as $\deg(x):=|\{i\in I \mid x\in U_i\}|$. Intuitively $\deg(x)$ is the number of arrows emanating from $x$.

\begin{assume} In this section, all graphings and equivalence relations are assumed to be countable. \end{assume}

A measured graphing yields a random graph in the following way: For every $x\in X$, let $\mathcal G_x$ be the graph with vertex set given by the equivalence class $[x]_{\mathcal R}$ and place  an edge between $y,z\in [x]_{\mathcal R}$ whenever $z=\varphi_i(y)$ for some $i\in I$ (multiple edges are allowed). The graphs $\mathcal  G_x$ have degrees bounded by $|I|$ and are undirected since $(\varphi_i)_{i\in I}$ is symmetric. If we choose a $\mu$-random point $x$, the resulting graph $\mathcal G_x$ is a random rooted graph. 

The properties of $\mathcal G_x$ will depend on the graphing. For example, if the graphing consists of measure preserving maps then the resulting random graph is unimodular (see \cite{ALR2007}). We are mainly interested in stationary graphs. Those will be realized as equivalence relations in \textbf{stationary graphings}.
\begin{dfn}
A finite graphing $(X, \nu, (\varphi_i)_{i\in I})$ is \textbf{stationary} if for every $f\in L^\infty(X,\nu)$ we have 
$$\int_X f(x)d\nu(x)=\int_X \left(\frac{1}{\deg(x)}\sum_{x\in U_i}f(\varphi_i(x))\right)d\nu(x).$$
\end{dfn}
If $(X, \nu, (\varphi_i)_{i\in I})$ is a stationary graphing then $\mathcal G_x$ is a stationary random graph. Conversely, any stationary random graph arises in this way:
\begin{lem}\label{lem:BernoulliGraphing} For every stationary random graph $(\mathcal G,o)$ there exists a stationary graphing $(X,\nu,(\varphi_i)_{i\in I})$ such that $(\mathcal G,o)=(\mathcal G_x,x)$ in distribution.
\end{lem}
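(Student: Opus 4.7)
The plan is to realize $(\mathcal G, o)$ as the graphing coming from a random edge labeling, so that finitely many measurable partial bijections suffice. Fix the degree bound $d$, and let $X$ be the standard Borel space of (isomorphism classes of) connected rooted graphs $(\mathcal H, v)$ of degree at most $d$ equipped with a symmetric labeling $\tau$ of the oriented edges: to each directed edge $(u,w)$ we assign $\tau(u,w) = (i,j) \in \{1,\ldots,d\}^2$ with the compatibility $\tau(w,u) = (j,i)$, and such that at each vertex $v$ the first coordinate $w \mapsto \tau(v,w)_1$ is a bijection from the neighbors of $v$ onto $\{1,\ldots,\deg(v)\}$. Taking $I := \{1,\ldots,d\}^2$, set
$$U_{i,j} := \{(\mathcal H,v,\tau) \in X : \exists\, w \sim v \text{ with } \tau(v,w)=(i,j)\}$$
and define $\varphi_{i,j} : U_{i,j} \to X$ by moving the root from $v$ to that unique $w$, keeping $\tau$ fixed. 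The compatibility condition forces $\varphi_{i,j}^{-1} = \varphi_{j,i}$, so the family is symmetric, and $\deg(x) = \deg_{\mathcal H}(v)$.

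Next I would define $\nu$ on $X$ by the natural lift of the given stationary distribution: sample $(\mathcal G,o)$ from its law, then independently at each vertex $v$ sample a uniformly random bijection from the set of neighbors of $v$ to $\{1,\ldots,\deg(v)\}$, and combine these into $\tau$. A short Borel-selection argument (first fix a measurable enumeration of the edges at each vertex in $\mathcal M_{\leq d}$, then permute uniformly) shows this is a well-defined measurable Markov kernel from $\mathcal M_{\leq d}$ into $X$, producing $\nu$. By construction the graph $\mathcal G_x$ with vertex set $[x]_{\mathcal R}$ and edges given by the $\varphi_{i,j}$ is, as a rooted graph, exactly $(\mathcal G, o)$ after forgetting the labeling, so $(\mathcal G_x, x) = (\mathcal G, o)$ in distribution.

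It remains to verify the stationary graphing identity. Using the bijection between neighbors of $v$ and labels, one has
$$\frac{1}{\deg(x)} \sum_{(i,j):\, x \in U_{i,j}} f(\varphi_{i,j}(x)) \;=\; \frac{1}{\deg_{\mathcal G}(v)} \sum_{w \sim v} f(\mathcal G, w, \tau).$$
Integrating against $\nu$, the right-hand side equals $\mathbb E[f(\mathcal G, X_1, \tau)]$, the expectation of $f$ after one simple-random-walk step on the labeled graph. Since the conditional law of $\tau$ given the underlying rooted graph depends only on the graph structure and not on the choice of root, and since the random graph $(\mathcal G, o)$ is stationary by hypothesis, $(\mathcal G, X_1, \tau)$ and $(\mathcal G, o, \tau)$ have the same distribution, so the right-hand side equals $\int f\, d\nu$.

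The main obstacle is the measurability of the labeling construction: because the labels live on vertex-dependent finite sets, one must be careful to produce the uniform random labeling as a genuine Borel kernel. Once this is set up, everything else is a mechanical unpacking of definitions and an application of the single-step stationarity of $(\mathcal G, o)$.
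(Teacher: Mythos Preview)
Your construction is the Bernoulli-graphing approach that the paper simply defers to (it cites Lov\'asz \cite[18.37]{Lovasz} and \cite[Proposition~14]{AGV14} and says the argument is identical), so the overall strategy and the stationarity verification are the intended ones.

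There is, however, one genuine gap in your implementation. You assert that ``the graph $\mathcal G_x$ with vertex set $[x]_{\mathcal R}$ \ldots\ is, as a rooted graph, exactly $(\mathcal G,o)$ after forgetting the labeling''. This holds only when the labeled graph $(\mathcal G,\tau)$ has no nontrivial automorphism: if some automorphism of $\mathcal G$ preserves $\tau$, then two distinct vertices $v\neq w$ can satisfy $(\mathcal G,v,\tau)\cong(\mathcal G,w,\tau)$ in $X$, the re-rooting orbit collapses, and $\mathcal G_x$ is the proper quotient $\mathcal G/\Aut(\mathcal G,\tau)$ rather than $\mathcal G$. With a \emph{finite} label alphabet this occurs with positive probability; already for $\mathcal G=K_2$ your labeling is forced to be $\tau\equiv(1,1)$, the edge-flip survives, and $\mathcal G_x$ is a single vertex with a loop. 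Since the lemma is stated for arbitrary stationary random graphs (no infiniteness assumption), this is not a corner case one can ignore.

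The standard remedy, and the reason the cited references use continuous decorations, is to additionally attach an i.i.d.\ uniform $[0,1]$ label to every vertex; this makes the decorated graph rigid almost surely, after which your half-edge scheme (or any Borel enumeration of neighbours) works verbatim. That extra decoration also dissolves the measurability obstacle you flag, since the $[0,1]$-labels give a canonical enumeration of $V(\mathcal G)$ on a conull set.
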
 
\begin{proof}
The proof is the same as Lovasz's result that unimodular random graphs can be realized by unimodular graphings \cite[18.37]{Lovasz} (see also the proof of \cite[Proposition 14]{AGV14}), but with $\sigma$ being a stationary distribution on the space of rooted graphs.
\end{proof}
The random walk on a finite measured graphing $(X,\nu, (\varphi_i)_{i\in I})$ is the sequence of random variables $(X_n)_{n\geq 0}$ where $X_0$ is a $\nu$-random point of $X$ and for all $n\geq 1$ we have 
	$$\bbP(X_n=y \mid X_{n-1}=x)=\frac{1}{\deg(x)}|\{i\in I \mid \varphi_i(x)=y\}|.$$ 
For a stationary graphing each step $X_n$ will have the same distribution $\nu$.

We recall the definition of a \textbf{hyperfinite} measured equivalence relation. Let $(X,\nu,\mathcal R)$ be a non-singular measured equivalence relation. We say that $\mathcal R$ is hyperfinite if there exists a family of finite measured equivalence relations $(X,\nu, \mathcal E_n)$ with  $\mathcal E_n\subset \mathcal R$ such that $\mathcal R=\bigcup_{n=1}^\infty \mathcal{E}_n.$ By a theorem of Connes-Feldmann-Weiss \cite[Theorem 10]{ConnesFeldmanWeiss} a non-singular equivalence relation is hyperfinite if and only if it is amenable. 

We prove that for any measured equivalence relation, its `tautological bundle' is amenable. Let $(X,\nu, \mathcal R)$ be a non-singular measured equivalence relation. The tautological bundle is the pair $([X],[\nu])$ where $[X]$ is the set $\mathcal R\subset X\times X$ equipped with measure $[\nu]:=\int_X (\delta_x\times c_x) \, d\nu(x)$ where $c_x$ is the counting measure on the equivalence class $[x]_{\mathcal R}$. Points in the space $[X]$ are pairs $(x,y)$ where $x\in X$ and $y\in [x]_\mathcal R.$
\begin{lem}\label{lem:TautER}
Let $\mathcal R'$ be the equivalence relation on $[X]$ generated by $(x_1,y)\sim (x_2,y)$ for $x_1,x_2\in [y]_{\mathcal{R}}$. Then $\mathcal R'$ is non-singular and amenable as an equivalence relation.
\end{lem}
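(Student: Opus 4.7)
The key observation is that the $\mathcal{R}'$-class of a point $(x,y) \in [X]$ is precisely the fiber $[y]_{\mathcal{R}} \times \{y\}$ of the second projection $\pi_2 \colon [X] \to X$, and the diagonal $\{(y,y) : y \in X\} \subset [X]$ is a Borel transversal for $\mathcal{R}'$. This ``smooth'' structure of $\mathcal{R}'$ makes both claims almost formal.

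For non-singularity, I would disintegrate $[\nu]$ along $\pi_2$. Using the non-singularity of $\mathcal{R}$ to interchange the roles of $x$ and $y$ (the Radon--Nikodym factor being the cocycle $D$ of $\mathcal{R}$), one checks that the conditional measure on each fiber $[y]_{\mathcal{R}} \times \{y\}$ lies in the class of the counting measure on $[y]_{\mathcal{R}}$. Since a partial isomorphism of $\mathcal{R}'$ fixes the second coordinate and merely permutes elements of $[y]_{\mathcal{R}}$, it preserves the fiber counting measure; non-singularity of $\mathcal{R}'$ follows, with explicit Radon--Nikodym derivative expressible through $D$.

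For amenability, by Connes--Feldman--Weiss it suffices to exhibit $\mathcal{R}'$ as hyperfinite. Since $\mathcal{R}$ is countable (the standing assumption), Feldman--Moore yields countably many Borel automorphisms $(T_k)_{k \geq 0}$ of $X$ with $T_0 = \mathrm{id}$ and $\mathcal{R} = \bigcup_k \mathrm{graph}(T_k)$. I would define a Borel sub-equivalence relation $\mathcal{E}_n \subset \mathcal{R}'$ by declaring $(x_1, y) \, \mathcal{E}_n \, (x_2, y)$ iff $x_1 = x_2$ or both $x_1, x_2 \in \{T_k(y) : 0 \leq k \leq n\}$. Each $\mathcal{E}_n$ has classes of cardinality at most $n+2$ and is therefore finite, while the increasing union $\bigcup_n \mathcal{E}_n$ is all of $\mathcal{R}'$ since $\{T_k(y) : k \geq 0\}$ exhausts $[y]_{\mathcal{R}}$.

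Conceptually the lemma is very natural because $\mathcal{R}'$ admits a Borel transversal and is thus ``type~I'' in the ergodic-theoretic sense, and such relations are always hyperfinite. Accordingly, I anticipate no substantive obstacle -- only the routine bookkeeping of the disintegration for non-singularity and the Borel measurability of each $\mathcal{E}_n$.
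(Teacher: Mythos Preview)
Your proposal is correct and follows essentially the same approach as the paper: both prove hyperfiniteness by exhausting each $\mathcal{R}'$-class $[y]_{\mathcal{R}}\times\{y\}$ by a Borel increasing sequence of finite sets (you use initial segments $\{T_k(y):k\le n\}$ from Feldman--Moore, the paper uses balls in a weighted path metric on a graphing). Your non-singularity argument via the left/right measure swap and your observation that the diagonal is a Borel transversal (so $\mathcal{R}'$ is smooth) are more explicit than what the paper records, but the underlying idea is the same.
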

\begin{proof}
We will prove that $\mathcal R'$ is non-singular and hyperfinite, so amenability will follow by \cite{ConnesFeldmanWeiss}. Let $(\varphi_i)_{i\in \bbN}$ be a countable graphing generating $\mathcal R$. For each $x\in X$, equip the graph $\mathcal G_x$ with a path metric $d_{[x]}$, declaring the length of the edge $(x,\varphi_i(x))$ to be $i$. This ensures that the balls in $\mathcal G_x$ are finite.  

For $r>0$, let $\mathcal E_r$ be the equivalence relation on $[X]$ generated by $(x_1,y)\sim (x_2,y)$ for $x_1,x_2\in [y]_{\mathcal R}$ such that $d_{[y]}(y,x_1),d_{[y]}(y,x_2)\leq r.$ The balls in $\mathcal G_y$ are finite for every $y\in X$, so the classes of $\mathcal E_r$ are finite. On the other hand, for every pair $x_1,x_2\in [y]_{\mathcal {R}}$ there exists $r>0$ such that $x_1,x_2$ are in the $r$-ball centered at $y$. We conclude that $\mathcal R'=\cup_{r>0} \mathcal E_r$, so $\mathcal R'$ is indeed hyperfinite.\end{proof}
The following lemma is an easy consequence of hyperfiniteness. 
\begin{lem}\label{lem:FiniteComponents1}
Let $(X,\nu,(\varphi_i)_{i\in I})$ be a finite symmetric graphing generating a non-singular amenable measured equivalence relation $\mathcal R$. Then for every $\varepsilon>0$ there exists $M\in \bbN$ and a subset $Z\subset X$ such that $\nu(Z)\geq 1-\varepsilon$ and the equivalence relation $\mathcal E$ on $Z$ generated by the restrictions of $(\varphi_i)_{i\in I}$ satisfies $|[z]_{\mathcal E}|\leq M$ for $\nu$-almost all $z\in Z$.
\end{lem}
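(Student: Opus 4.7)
The plan is to use the Connes--Feldman--Weiss theorem to approximate the amenable relation $\mathcal R$ by finite subequivalence relations, and then to choose $Z$ so that any graphing-induced path in $Z$ stays inside a single class of the approximation.

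First I would invoke hyperfiniteness of $\mathcal R$ to write $\mathcal R = \bigcup_{n\in\bbN} \mathcal E_n$ with each $\mathcal E_n$ a finite measured subequivalence relation, increasing in $n$. Because the graphing $(\varphi_i)_{i \in I}$ is finite and each pair $(x, \varphi_i(x))$ lies in $\mathcal R$, for every fixed $i \in I$ we have $\nu(\{x \in U_i : (x, \varphi_i(x)) \notin \mathcal E_n\}) \to 0$ as $n \to \infty$. Choosing $n$ large and taking a union bound over the finite set $I$, I obtain a set
\[
B_n := \{\, x \in X : (x, \varphi_i(x)) \in \mathcal E_n \text{ for every } i \in I \text{ with } x \in U_i \,\}
\]
of measure at least $1 - \varepsilon/2$. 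Next, since every $\mathcal E_n$-class is finite $\nu$-a.e., I can fix $M \in \bbN$ large enough that the set $C_M := \{x \in X : |[x]_{\mathcal E_n}| \leq M\}$ has measure at least $1 - \varepsilon/2$, and set $Z := B_n \cap C_M$, so that $\nu(Z) \geq 1-\varepsilon$.

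The verification is then direct. Let $\mathcal E$ denote the equivalence relation on $Z$ generated by the restrictions of the $\varphi_i$. If $z, z' \in Z$ are $\mathcal E$-equivalent, there is a finite sequence $z = z_0, z_1, \ldots, z_k = z'$ with every $z_j \in Z$ and $z_{j+1} = \varphi_{i_j}(z_j)$ for some $i_j \in I$. Since each $z_j$ lies in $B_n$, every edge $(z_j, z_{j+1})$ belongs to $\mathcal E_n$, and transitivity of $\mathcal E_n$ yields $z' \in [z]_{\mathcal E_n}$. Hence $[z]_{\mathcal E} \subset [z]_{\mathcal E_n}$, which has size at most $M$ because $z \in C_M$.

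The only subtle point is the formulation of $B_n$: one must demand that the \emph{edges} emanating from $x$ already lie in $\mathcal E_n$, rather than asking for the full $\mathcal R$-class of $x$ to equal its $\mathcal E_n$-class (which would be too strong to arrange on a set of large measure when the $\mathcal R$-classes are infinite). Once this local control on $B_n$ is in place, the finite-class cut-off $C_M$ and the short path argument above complete the proof essentially automatically.
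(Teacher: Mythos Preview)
Your proof is correct and follows essentially the same approach as the paper: your set $B_n$ coincides with the paper's $Z_n=\{x\in X\mid \varphi_i(x)\in[x]_{\mathcal E_n}\text{ for all }i\in I\}$, and your cut-off $C_M$ plays the same role as the paper's choice of $M$, with the containment $[z]_{\mathcal E}\subset[z]_{\mathcal E_n}$ established by the identical path argument. The only cosmetic difference is that the paper chooses $M$ after restricting to $Z_n$ rather than on all of $X$, which is immaterial.
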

\begin{proof}
By \cite[Theorem 10]{ConnesFeldmanWeiss} there exists an increasing sequence of finite equivalence subrelations $\mathcal E_n\subset \mathcal R$ such that $\mathcal R=\bigcup_{n\in\bbN} \mathcal E_n$. Set
 	$$Z_n:=\{x\in X \mid \varphi_i(x)\in [x]_{\mathcal E_n} \textrm{ for all } i\in I\}.$$ 
Clearly $Z_n\subset Z_{n+1}$ and $\bigcup_{n\in\bbN} Z_n=X$ modulo a null set. Choose $n$ such that $\nu(Z_n)\geq 1-\varepsilon/2$. The function $Z_n\ni z\mapsto |[z]_{\mathcal E_n}|\in \bbN$ is measurable, so there exists $M\in\bbN$ such that $\nu(\{z\in Z_n \mid |[z]_{\mathcal E_n}|\leq M\})\geq 1-\varepsilon$. Set $Z:=\{z\in Z_n \mid |[z]_{\mathcal E_n}|\leq M\}$ and let $\mathcal E$ be the equivalence relation on $Z$ generated by the restrictions of $(\varphi_i)_{i\in I}$. Clearly $[z]_{\mathcal E}\subset [z]_{\mathcal E_n}$, so every class of $\mathcal E$ has at most $M$ elements.
\end{proof}
Finally, we discuss hyperfiniteness for stationary random graphings.
\begin{dfn} A symmetric stationary graphing $(X,\nu,(\varphi_i)_{i\in I})$ is \emph{hyperfinite} if the measured equivalence relation generated by $(\varphi_i)_{i\in I}$ is hyperfinite.
\end{dfn}
We have the following relationship between hyperfiniteness of a stationary random graph and graphings:
\begin{lem} A stationary random graph $(\mathcal G,o)$ is hyperfinite if and only if there exists a hyperfinite symmetric stationary graphing $(X,\nu,(\varphi)_{i\in I})$ such that $(\mathcal G,o)=(\mathcal G_x,x)$ in distribution. 
\label{lem:equiv_hyperfinites}
\end{lem}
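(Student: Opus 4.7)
The plan is to prove the two implications separately: the ``if'' direction is a direct application of Lemma \ref{lem:FiniteComponents1}, while the ``only if'' direction requires constructing an exhausting sequence of finite equivalence subrelations from the stationary random subsets witnessing stationary hyperfiniteness of $(\mathcal G,o)$.

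For the ``if'' direction, suppose $(X,\nu,(\varphi_i)_{i\in I})$ is a hyperfinite stationary graphing with $(\mathcal G_x,x)=(\mathcal G,o)$ in distribution. Given $\varepsilon>0$, Lemma \ref{lem:FiniteComponents1} provides $M\in\bbN$ and $Z\subset X$ with $\nu(Z)\geq 1-\varepsilon$ such that the restricted equivalence relation $\mathcal R|_Z$ has classes of size at most $M$. I would set $A:=X\setminus Z$ and define the random subset $S(x):=A\cap[x]_{\mathcal R}$ of $\mathcal G_x$. Because moving the root along a generator does not alter the $\mathcal R$-orbit, and hence does not alter $S$, stationarity of the graphing promotes to stationarity of the decorated random rooted graph $(\mathcal G_x,x,S(x))$. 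Then $\bbP(o\in S)=\nu(A)\leq\varepsilon$, and the components of $\mathcal G_x\setminus S$ correspond to the $\mathcal R|_Z$-classes inside $[x]_\mathcal R\cap Z$, which are finite of size at most $M$.

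For the ``only if'' direction, I would pick witnesses $(S_n)_n$ to stationary hyperfiniteness with $\bbP(o\in S_n)<2^{-n}$ and realize the enriched stationary random variable $(\mathcal G,o,(S_n)_n)$ --- a rooted graph decorated by a countable sequence of vertex subsets --- by a stationary graphing $(X,\nu,(\varphi_i)_{i\in I})$ equipped with measurable subsets $A_n\subset X$ of mass $<2^{-n}$, by a straightforward adaptation of Lemma \ref{lem:BernoulliGraphing} in which the stationary distribution is placed on decorated rather than plain rooted graphs. Setting $B_n:=\bigcup_{k\geq n}A_k$ yields a decreasing sequence with $\nu(B_n)<2^{-n+1}$ and $\nu(\bigcap_n B_n)=0$; by quasi-invariance of $\nu$ the $\mathcal R$-saturation of $\bigcap_n B_n$ remains $\nu$-null. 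I would then define $\mathcal E_n\subset\mathcal R$ to be the equivalence relation whose classes on $X\setminus B_n$ are those of $\mathcal R|_{X\setminus B_n}$ and whose classes on $B_n$ are singletons. Each $\mathcal E_n$-class is finite, since it is a component of $\mathcal G_x\setminus B_n$ which refines the finite components of $\mathcal G_x\setminus A_n$; the sequence $(\mathcal E_n)_n$ is increasing, since $B_n\supset B_{n+1}$ means removing fewer vertices can only merge restricted classes; and for $(x,y)\in\mathcal R$ with $x$ outside the null $\mathcal R$-saturation of $\bigcap_n B_n$, any finite path from $x$ to $y$ passes through only finitely many vertices, each eventually in $X\setminus B_n$, so for $n$ large the whole path lies in $X\setminus B_n$ and thus $(x,y)\in\mathcal E_n$. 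This gives $\mathcal R=\bigcup_n\mathcal E_n$ up to null sets, proving hyperfiniteness of the graphing.

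The main obstacle is the ``only if'' direction: one must produce an increasing sequence of finite Borel subrelations exhausting $\mathcal R$ starting only from the a priori unrelated sets $A_n$. The device of passing to the decreasing tails $B_n=\bigcup_{k\geq n}A_k$ is essential --- using $A_n$ directly yields finite subrelations that are not nested, while naively joining them could create infinite classes --- and the combination of Borel--Cantelli with quasi-invariance of $\nu$ is what makes $\bigcap_n B_n$ and its $\mathcal R$-saturation negligible. The realization of decorated stationary random graphs by stationary graphings is a routine extension of Lemma \ref{lem:BernoulliGraphing} and does not introduce further difficulties.
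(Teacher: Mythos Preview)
Your ``if'' direction is correct and matches the paper's argument.

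In the ``only if'' direction there is one genuine gap. You write ``realize the enriched stationary random variable $(\mathcal G,o,(S_n)_n)$'', but the definition of stationary hyperfiniteness only hands you each $S_n$ separately, as a stationary random subset with its own law on $Y^1$; there is no a priori joint stationary law of the whole sequence $(S_n)_n$ over the same underlying $(\mathcal G,o)$, so there is nothing yet to which an adaptation of Lemma~\ref{lem:BernoulliGraphing} can be applied. The paper handles this explicitly: it forms an initial coupling of the marginals $\mu_i$ on $Y^\infty$ and then passes to a weak-$*$ limit of ergodic averages of the random walk, which is stationary and keeps each marginal equal to $\mu_i$ because the $\mu_i$ are already stationary and the projections $\pi_i$ intertwine the walk. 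The paper also perturbs each $S_n$ by a low-intensity Bernoulli percolation to kill automorphisms of $(\mathcal G,o,S_n)$, so that the re-rooting class in the moduli space is in bijection with the vertex set and hence $(\mathcal G_x,x)$ really recovers $(\mathcal G,o)$; your appeal to Lemma~\ref{lem:BernoulliGraphing} morally contains this step, but you should say so.

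Once the coupling is in place, your argument is correct and in fact tidier than the paper's on one point. The paper works directly with the sets $Z_n=\{o\notin A_n\}$ with $\nu(Z_n^c)\leq 1/n$ and asserts $\mathcal R^\infty=\bigcup_n\mathcal E_n$; but the $\mathcal E_n$ are not nested and the $1/n$ are not summable, so neither monotonicity nor exhaustion is immediate. Your tail construction $B_n=\bigcup_{k\ge n}A_k$ with summable $\nu(A_k)$ cleanly produces an \emph{increasing} sequence of finite subrelations, and the combination of Borel--Cantelli with quasi-invariance of $\nu$ legitimately gives $\mathcal R=\bigcup_n\mathcal E_n$ off a null saturated set. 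So modulo the missing coupling step, your route is the same as the paper's in spirit but with a more careful endgame.
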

\begin{proof} Suppose first $(X,\nu,(\varphi_i)_{i\in I})$ is hyperfinite. Let $\varepsilon>0$, choose $Z\subset X$ as in Lemma \ref{lem:FiniteComponents1} and set $S:=X\backslash Z$. In particular, $\nu(S)\leq \varepsilon$ and $\mathcal{R}$ has finite equivalence classes on the complement of $P$, so $(\mathcal G_x, x)$ is hyperfinite.

Conversely, suppose $(\mathcal G,o)$ is hyperfinite. and choose stationary random subsets $S_n$ such that $\mathbb{P}(o\in S_n)\leq \frac{1}{n}$ and $\mathcal G \backslash S_n$ is a union of finite components almost surely. For technical reasons assume that $S_n$ have no symmetries a.s. meaning that $(\mathcal G,o,S_n)\neq (\mathcal G,o',S_n)$ for any two choices of the root $o$ and $o'$. This can be always arranged by adding to $S_n$ a small intensity Bernouill percolation on $\mathcal G$. 

We will construct a stationary hyperfinite graphing realizing the stationary random graph $(\mathcal G, o)$ on a suitable moduli space of decorated graphs: Let $Y^1,Y^\infty$ be the space of connected, rooted graphs of degree at most $d$ decorated with a subset or a sequence of subsets respectively. We represent elements of $Y^1$ as $(\mathcal H,o,A)$ where $\mathcal H$ is a connected (deterministic) graph of degree at most $d$ with root $o$ and $A\subset \mathcal H$. Similarly we represent elements of $Y^\infty$ as $(\mathcal H,o,(A_i)_{\in\bbN})$ where $A_i\subset \mathcal H$ for $i\in\bbN$. The standard graphing on $\mathcal M_d$ (re-rooting to a neighbor) lifts to unique symmetric graphings on $Y^1$ and $Y^\infty$, which we will also call the standard graphing. Write $\mathcal R^\infty$ for the re-rooting relation on $Y^\infty$. For $i\in\bbN$, let $\pi_i\colon Y^\infty\to Y^1$ be the projection $(\mathcal H,o,(A_j)_{j\in\bbN})\mapsto (\mathcal H,o,A_i)$, and note that $\pi_i$ is equivariant with respect to the standard graphings. Write $\mu_i$ for the distribution of $(\mathcal G,o, S_i)$ in $Y^1$. Since $S_i$ is a stationary random subset, $\mu_i$ is a stationary probability measure on $Y^1$. Moreover, since $S_i$ have no symmetries a.s. we have $(\mathcal G,o)=(\mathcal G_x,x)$ in distribution for $\mu_i$-random $x\in Y^1$. Finally, let $\mu$ be any stationary coupling of $(\mu_i)_{i\in\bbN}$ on $Y^\infty$, i.e. a stationary probability measure on $Y^\infty$ such that $(\pi_i)_*(\mu)=\mu_i$ for all $i$. Any weak-* limit of ergodic averages of the random walk starting at a $(\prod_i \mu_i)$-random point is such a coupling almost surely. 

We claim that $(Y^\infty,\mu)$ with the re-rooting equivalence relation is hyperfinite. Indeed, let $Z_n:=\{(\mathcal H,o,(A_i)_{i\in \bbN}) \mid o\not\in A_n\}\subset Y^\infty$. If $x=(\mathcal H,o,(A_i)_{i\in\bbN})$ then $(\mathcal{G}_x,x,[x]\cap Z_n)=(\mathcal H,o,\mathcal H\setminus A_n)$. But the distribution of the triple $(\mathcal H,o,A_n)$ is given by $\mu_i$ so for $\mu$-random $x$ we have $(\mathcal{G}_x,x,[x]\cap Z_n)=(\mathcal G,o,\mathcal G\setminus S_n)$ in distribution. Since $(\mathcal G\setminus S_n)$ is a union of finite connected components a.s., we deduce that the relation $\mathcal E_n$ on $Y^\infty$ generated by the standard graphing restricted to $Z_n$ is finite a.s. Finally, we have $\mu(Z_n)=\bbP (o\not\in S_n)\geq 1-\frac{1}{n}$, so $\mathcal R^\infty=\bigcup \mathcal E_n$ modulo a null set. We deduce that $(Y^\infty, \mu,\mathcal R^\infty)$ with the standard graphing is stationary hyperfinite. 
 \end{proof}

\section{Poisson boundaries of stationary graphings}\label{sec:poissonSRG} 
The goal of this section is to develop a good (in particular, amenable) notion of Poisson boundary of a stationary graphing $(X,\nu,(\varphi_i)_{i\in I})$. As in the classical case of groups, it is the quotient of a path space by the shift operator, but in our case the paths traverse different graphs depending on the initial point, so that the path space is a bundle over $X$. Amenability of this Poisson boundary will then follow from amenability of the tautological bundle and the following lemma that the quotient of an amenable equivalence relation by a single transformation is amenable, which is the analogue of Zimmer's \cite[Theorem 3.3]{Zimmer78} (see Lemma \ref{lem:QuotientAm}).
\begin{lem}\label{lem:QuotientAMforER} Let $(X,\nu)$ be a standard Borel probability measure space with a non-singular amenable equivalence relation $\mathcal R$ and suppose $T:X\to X$ is a non-singular measurable map that preserves a graphing of $\mathcal R$. Then the quotient relation $\overline{\mathcal{R}}$ on $T\backsslash X$ is amenable.
\end{lem}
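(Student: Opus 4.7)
The plan is to transcribe Zimmer's proof of Lemma~\ref{lem:QuotientAm} into the language of measured equivalence relations, with the commuting amenable factor $G_2$ in Zimmer's argument played by the semigroup $\bbN$ generated by $T$. I will use Zimmer's characterization of amenability: a non-singular equivalence relation $\mathcal Q$ on a standard Borel probability space is amenable if and only if every affine $\mathcal Q$-action on a Borel field of compact convex subsets of a separable Banach space admits a measurable invariant section. So I start with such an affine $\overline{\mathcal R}$-action on a field $\{K_y\}_{y\in T\backsslash X}$ and aim to produce an invariant section.

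Let $p\colon X\to T\backsslash X$ be the quotient map and pull the field back to the field $\{K_{p(x)}\}_{x\in X}$ on $X$. Since $T$ preserves a graphing of $\mathcal R$, the map $T\times T$ sends $\mathcal R$ into itself; moreover, $(x,y)$ and $(Tx,Ty)$ project to the same edge of $\overline{\mathcal R}$, so the pulled-back cocycle $\tilde\alpha_{x,y}\colon K_{p(y)}\to K_{p(x)}$ satisfies $\tilde\alpha_{Tx,Ty}=\tilde\alpha_{x,y}$. Amenability of $\mathcal R$ applied to this pulled-back action produces a measurable $\mathcal R$-invariant section $s\colon x\mapsto s(x)\in K_{p(x)}$. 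Setting $(T^n s)(x):=s(T^n x)\in K_{p(T^nx)}=K_{p(x)}$, the $T$-invariance of $\tilde\alpha$ together with the $\mathcal R$-invariance of $s$ imply that each $T^n s$ is again an $\mathcal R$-invariant section. Since the fibers are convex, the Ces{\`a}ro averages $s_N:=\tfrac{1}{N}\sum_{n=0}^{N-1}T^n s$ remain $\mathcal R$-invariant. A weak-$\ast$ compactness/Banach--Alaoglu extraction along the F{\o}lner sequence $\{0,\ldots,N-1\}$ for $\bbN$ should then yield a measurable section $s_\infty$ that is simultaneously $\mathcal R$- and $T$-invariant, and hence descends to an $\overline{\mathcal R}$-invariant section on $T\backsslash X$.

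The main obstacle is the measurable selection of the weak-$\ast$ limit $s_\infty$: one must arrange for the field $\{K_{p(x)}\}$ to sit inside a common separable (dual) Banach space so that Banach--Alaoglu supplies sequential weak-$\ast$ compactness on the space of Borel sections and the subset of $\mathcal R$-invariant sections is weak-$\ast$ closed, and then extract a Borel-measurable cluster point along the F{\o}lner sequence (using a standard measurable selection as in \cite{Zimmer78}). Once this measurable framework is in place, preservation of $\mathcal R$-invariance passes to the limit by convexity and closedness, $T$-invariance follows from F{\o}lner averaging, and the remainder of the argument is a direct transcription of Zimmer's proof with the single transformation $T$ playing the role of the commuting amenable factor.
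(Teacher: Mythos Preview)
Your proposal is correct and is precisely the transcription of Zimmer's argument that the paper has in mind; in fact the paper omits the proof entirely, stating only that one substitutes equivalence relations for group actions in Zimmer's proof of Lemma~\ref{lem:QuotientAm} (with $G_2=\bbN$ generated by $T$), which is exactly what you have sketched. Your identification of the measurable weak-$\ast$ selection as the one technical point requiring care is apt, and the reference the paper gives (\cite[3.3]{MR1242044}) is there precisely to handle such issues in the equivalence-relation setting.
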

Since the proof is essentially identical to the proof of Lemma \ref{lem:QuotientAm}, we omit it here. One only needs to substitute equivalence relations for group actions in all relevant definitions (such as cocycles), see e.g. \cite[3.3]{MR1242044} for details. The following proposition constructs Poisson boundaries for stationary graphings:
\begin{prop}\label{prop:PoissonRSG}
Let $(X,\nu,\mathcal R)$ be a non-singular measured equivalence relation. Let $(\varphi_i)_{i\in I}$ be a finite symmetric stationary graphing generating $\mathcal R$. Then there exists a space $(P(X),\widetilde{\nu})$ with a stationary graphing  $(\widetilde{\varphi_i})_{i\in I}$ and a measurable map $\pi\colon (P(X),\widetilde{\nu})\to (X,\nu)$ such that
\begin{enumerate}
\item $\pi_*(\widetilde{\nu})=\nu$,
\item $\pi\circ \widetilde{\varphi_i}=\varphi_i\circ \pi$ for all $i\in I$,
\item The equivalence relation $\widetilde{\mathcal R}$ generated by $(\widetilde{\varphi_i})_{i\in I}$ is amenable. 
\end{enumerate}
\end{prop}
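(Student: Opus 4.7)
The approach is to mimic Zimmer's classical construction of the Poisson boundary reviewed in Section \ref{sec:poisson}, with two substitutions. The amenable $G$-action on $(G,\nu)$ used as a base in Zimmer's argument is replaced by the tautological bundle relation $\mathcal R'$ on $([X],[\nu])$, shown to be amenable in Lemma \ref{lem:TautER}; and Zimmer's Lemma \ref{lem:QuotientAm} on amenability of a quotient by a single transformation is replaced by its equivalence-relation analogue, Lemma \ref{lem:QuotientAMforER}.

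First, I would parametrize walks in the graphing by labels in $I$. Set $\Omega := X\times I^{\bbN}$, equipped with the measure $\bbP^{\nu} := \int_X \bbP_x\, d\nu(x)$, where $\bbP_x$ denotes the law of the sequence of labels of the simple random walk on the graphing starting at $x$. Lift the graphing to $\Omega$ by $\widetilde\varphi_i(x_0,(j_n)) := (\varphi_i(x_0),(j_n))$ (defined on $\{x_0\in U_i\}$), and define the shift $T(x_0,(j_n)) := (\varphi_{j_1}(x_0),(j_{n+1}))$. Stationarity of the graphing $(\varphi_i)_{i\in I}$ on $(X,\nu)$ implies $T$ preserves $\bbP^{\nu}$, and since each $\varphi_j$ sends $\mathcal R$-classes to themselves, $T$ preserves the equivalence relation $\widetilde{\mathcal R}_0$ generated by $(\widetilde\varphi_i)_{i\in I}$.

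To prove that $\widetilde{\mathcal R}_0$ is amenable, I would thicken $\Omega$ by adjoining a ``hook'' coordinate $y\in [x_0]_{\mathcal R}$, producing the space $\widehat\Omega := \{(y,x_0,(j_n))\mid y\sim_{\mathcal R} x_0\}$ equipped with the natural measure obtained by sampling $(x_0,y)$ from $[\nu]$ and then running the random walk from $x_0$ to produce the labels. The projection $\widehat\pi\colon\widehat\Omega\to[X]$, $(y,x_0,(j_n))\mapsto(x_0,y)$, intertwines the lifted graphing on $\widehat\Omega$ (which acts on the $x_0$-coordinate only) with the graphing $(x,y)\mapsto(\varphi_i(x),y)$ on $[X]$ generating $\mathcal R'$. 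Since $\mathcal R'$ is amenable by Lemma \ref{lem:TautER} and the fibers of $\widehat\pi$ are copies of $I^{\bbN}$ carrying the trivial relation, a fiber-bundle argument gives amenability of the pulled-back relation on $\widehat\Omega$, and this descends to amenability of $\widetilde{\mathcal R}_0$ on $\Omega$ via the projection forgetting $y$.

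Finally, define $(P(X),\widetilde\nu)$ to be the quotient of $(\Omega,\bbP^{\nu})$ by the $\sigma$-algebra $\mathcal T\vee\sigma(p_0)$, where $\mathcal T$ is the $T$-invariant $\sigma$-algebra and $p_0\colon\Omega\to X$ is the first-coordinate map. By construction $p_0$ descends to the required $\pi\colon P(X)\to X$, yielding properties (1) and (2) tautologically, and the lifted graphing $(\widetilde\varphi_i)$ descends to a stationary graphing on $P(X)$. Amenability of the descended relation $\widetilde{\mathcal R}$ will then follow from Lemma \ref{lem:QuotientAMforER}. The main obstacle is a subtlety in this last step: Lemma \ref{lem:QuotientAMforER} directly gives amenability only for the pure $T$-quotient (by $\mathcal T$ alone), so one must verify that the further refinement by $\sigma(p_0)$ preserves amenability. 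This should reduce, via slicing by $\pi$-fibers, to showing amenability of the subrelation of $\widetilde{\mathcal R}_0$ preserving $p_0$, which again follows from Lemma \ref{lem:TautER} applied fiberwise.
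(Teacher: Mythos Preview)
Your overall strategy---build a path space over $X$, prove the lifted relation is amenable via the tautological bundle of Lemma~\ref{lem:TautER}, then quotient by the shift using Lemma~\ref{lem:QuotientAMforER}---is exactly the paper's. The gap is in your label parametrization $\Omega = X\times I^{\bbN}$, which does not support a non-singular lift of the graphing.

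The maps $\widetilde\varphi_i(x_0,(j_n)) = (\varphi_i(x_0),(j_n))$ are not non-singular with respect to $\bbP^{\nu}$. The measure $\bbP_{x_0}$ is supported on label sequences that are \emph{valid from $x_0$} (i.e.\ $x_0\in U_{j_1}$, $\varphi_{j_1}(x_0)\in U_{j_2}$, \dots), and validity from $x_0$ is not validity from $\varphi_i(x_0)$. Concretely, if for some $i,j$ there is a positive-measure set of $x_0\in U_i\cap U_j$ with $\varphi_i(x_0)\notin U_j$, then $A:=\{(y,\omega):y\notin U_j,\ \omega_1=j\}$ is $\bbP^{\nu}$-null (no walk from $y\notin U_j$ can use label $j$ first), while $\widetilde\varphi_i^{-1}(A)$ contains all such $(x_0,\omega)$ with $\omega_1=j$ and hence has positive $\bbP^{\nu}$-measure. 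So $(\widetilde\varphi_i)_{i\in I}$ is not a non-singular graphing on $(\Omega,\bbP^{\nu})$, and everything downstream (amenability of $\widetilde{\mathcal R}_0$, the quotient) is undefined. The same dependence of labels on the basepoint also prevents your $T$ from commuting with the $\widetilde\varphi_i$, which is what ``preserves a graphing'' in Lemma~\ref{lem:QuotientAMforER} is meant to encode.

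The paper sidesteps this by parametrizing paths by \emph{vertices} rather than labels: a point of $\Path(X)$ is a pair $(x,(v_n))$ with $(v_n)$ a trajectory in the graph $\mathcal G_x$. Since $\mathcal G_x=\mathcal G_{\varphi_i(x)}$ as abstract graphs, the lifts $\widetilde\varphi_i(x,(v_n)):=(\varphi_i(x),(v_n))$ are well-defined and non-singular with no validity constraint to violate. Moreover the paper's shift $T(x,(v_n)):=(x,(v_{n+1}))$ keeps the root $x$ fixed, so it genuinely commutes with every $\widetilde\varphi_i$; this makes $\pi_0$ already $T$-invariant and eliminates the need for your additional factor $\sigma(p_0)$ and the ``subtlety'' you flag at the end. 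Finally, instead of your hook coordinate $y$, the paper replaces $\bbP^0$ by the equivalent measure $\bbP'=\sum_{k\ge 0}2^{-k-1}\bbP^k$, under which the map $(x,(v_n))\mapsto(x,v_0)\in[X]$ is non-singular; this is the direct analogue of the group-case trick of convolving by $\sum 2^{-k-1}\mu^{\ast k}$ and avoids juggling the infinite measure $[\nu]$ on an auxiliary space.
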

\begin{proof}
For a bounded degree graph $\mathcal G$, define its path space $$\Path(\mathcal G):=\{(v_n)_{n\in\bbN}\subset \mathcal G \mid v_{n+1}\sim v_n\}.$$ We endow the space $\Path(\mathcal G)$ with the topology induced from $\mathcal G^\bbN$. The shift operator $T$ acts on $\Path(\mathcal G)$ by $T((v_n)_{n\in\bbN}):=(v_{n+1})_{n\in\bbN}.$ Write $C_{a_0,a_1,\ldots, a_m}$ for the cylinder $\{(v_n)_{n\in\bbN} \mid v_i=a_i \textrm{ for } i=0,\ldots, m\}.$ 
Equip $\Path(\mathcal G)$ with the measure $\bbP_o$ defined by $\bbP_o(C_{a_0,\ldots,a_m}):=0$ if $a_0\neq o$ and $\bbP_o(C_{a_0,\ldots,a_m}):=\prod_{i=0}^{m-1} \deg(a_i)^{-1}$ otherwise. This is the probability measure associated to the simple random walk on $\mathcal G$ starting at the root $o$. For $k\geq 1$ we define $\bbP_o^k:=T^k_*\bbP_o$. 

We can now define the total path space. Let $\Path(\mathcal M_{\leq d})$ be the moduli space of pairs consisting of a rooted graph $(\mathcal G,o)$ and a path $(v_i)_{i\in \bbN}\in \Path(\mathcal G)$, equipped with the topology of uniform convergence on finite balls around the root. Let $F:\Path(\mathcal M_{\leq d})\to\mathcal M_{\leq d}$ be projection onto $\mathcal M_{\leq d}$. We define $\Path(X)$ as the fibered product over $\mathcal M_{\leq d}$ with respect to $F$ and the map $X\ni x\mapsto (\mathcal G_x,x)\in \mathcal M_{\leq d}$, i.e.
$$\Path(X)=\Path(\mathcal M_{\leq d}) \underset{\mathcal M_{\leq d}}{\times} X:=\{(\mathcal G,o,(v_i)_{i\in \bbN})\times x \mid (\mathcal G_x,x)\simeq (\mathcal G,o)\}.$$
The measured structure is induced from the product. The probability measure $\bbP^k$ on $\Path(X)$ is given by the integral: 
	$$(\Path(X),\bbP^k):=\int_X (\Path(\mathcal G_x),\bbP_x^k) \, d\nu(x).$$
We think of a point in the space $\Path(X)$ as a pair $(x, (v_n)_{n\in\bbN})$ where $x\in X$ and $(v_n)_{n\in\bbN}$ is a trajectory in $\Path(\mathcal G_x).$ The natural projection map $\pi_0: \Path(X) \to X$ given by $\pi_0(x,(v_n)_{n\in\bbN}):=x$ satisfies $(\pi_0)_*\bbP^0=\nu$. The shift operator $T$ on $\Path(X)$ is defined fiberwise: $T(x,(v_n)_{n\in\bbN}):=(x,(v_{n+1})_{n\in\bbN})$, so in particular $\pi_0 \circ T=\pi_0.$ 

The lifts $\widetilde\varphi_i$ are defined as follows. For $x\in X_i$ and $(v_n)_{n\in\bbN}$ we put $\widetilde\varphi_i(x,(v_n)_{n\in\bbN})=(\varphi_i(x),(v_n)_{n\in\bbN}).$ This is well-defined since $\varphi_i(x)$ is in the same equivalence class as $x$, so $\mathcal{G}_{\varphi_i(x)}$ and $\mathcal{G}_{x}$ are the same graphs and $\Path(\mathcal{G}_x)=\Path(\mathcal{G}_{\varphi_i(x)}).$ We note that $\widetilde\varphi_i$ commute with $T$ and $\pi_0\circ \widetilde{\varphi_i}=\varphi_i\circ \pi_0$. 

Let $\mathcal A$ be the $\sigma$-algebra of $T$-invariant Borel subsets of $\Path(X)$. Define the Poisson boundary $(P(X),\widetilde{\nu})$ as the Mackey point realization of the quotient of $(\Path(X),\bbP^0)$ by $\mathcal A$. Since $T$ preserves the fibers, the projection map $\pi_0\colon\Path(X)\to X$ factors through $P(X)$. This gives a map $\pi \colon P(X)\to X$ such that $\pi_*\widetilde{\nu}=\nu$. The maps $\widetilde\varphi_i$ commute with $T$ so they descend to maps $\widetilde\varphi_i$ on $P(X)$. We have $\pi\circ \widetilde{\varphi_i}=\varphi_i\circ \pi$. This proves (1) and (2).

To prove (3), let $\mathcal R_0,\mathcal R_1$ be the equivalence relations on $\Path(X)$ and $P(X)$ respectively that are generated by $(\widetilde \varphi_i)_{i\in I}$.  Set $\bbP':=\sum_{k=0}^\infty 2^{-k-1}{\bbP^k}.$ We will show that $\mathcal R_0$ is a non-singular amenable equivalence relation on $(\Path(X),\bbP')$.  Let $([X],[\nu],\mathcal R')$ be the `tautological bundle' over $(X,\nu)$ as defined immediately prior to Lemma \ref{lem:TautER}. The map 
	\begin{align*}
	\Path(X) &\longrightarrow [X]	\\
	 (x,(v_n)_{n\in\bbN})&\mapsto (x,v_0)\
	\end{align*} 
that forgets the trajectory except the initial point, is non-singular\footnote{The random walk explores the entire graph so any starting point $v_0$ is achieved with positive probability.} and maps equivalence classes of $\mathcal{R}_0$ to those of $\mathcal{R}'$. Hence, $([X],[\nu],\mathcal R')$ is a factor of $(\Path(X),\bbP,\mathcal R_0)$. Since  it is amenable by Lemma \ref{lem:TautER}, $(\Path(X),\bbP',\mathcal R_0)$ is an amenable equivalence relation by \cite[Theorem 2.4]{Zimmer78}.

The quotient of $(\Path(X),\bbP',\mathcal R_0)$ by $\mathcal A$ is still the space $(P(X),\widetilde{\nu},\mathcal R_1)$. By Lemma \ref{lem:QuotientAMforER},  amenability passes to quotients by a single transformation, so we deduce that $\mathcal R_1$ is an amenable equivalence relation. \end{proof}
\begin{cor}\label{cor:SRGareHyperfinite} Every stationary random graph of bounded degree is a hyperfinite stationary random graph.\end{cor}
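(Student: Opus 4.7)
The plan is to combine Lemma \ref{lem:BernoulliGraphing}, Proposition \ref{prop:PoissonRSG}, the Connes--Feldman--Weiss theorem, and Lemma \ref{lem:equiv_hyperfinites}. Start with a stationary random graph $(\mathcal G,o)$ of bounded degree. By Lemma \ref{lem:BernoulliGraphing}, realize it by a finite symmetric stationary graphing $(X,\nu,(\varphi_i)_{i\in I})$, so that $(\mathcal G,o)\overset{d}{=}(\mathcal G_x,x)$ for $x\sim\nu$. In general this graphing need not be hyperfinite; the idea is to pass to its Poisson boundary, which by Proposition \ref{prop:PoissonRSG} provides a stationary graphing extension $(P(X),\widetilde\nu,(\widetilde{\varphi_i})_{i\in I})$ together with an equivariant projection $\pi\colon P(X)\to X$, and whose equivalence relation $\widetilde{\mathcal R}$ is amenable.

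The next step is to verify that the extension still realizes the same stationary random graph, i.e. $(\mathcal G_y,y)\overset{d}{=}(\mathcal G,o)$ for $y\sim\widetilde\nu$. Here I would use that $\pi\circ\widetilde{\varphi_i}=\varphi_i\circ\pi$ and that $\widetilde{\varphi_i}$ is defined exactly on $\pi^{-1}(U_i)$, so $\deg_{\widetilde{\mathcal R}}(y)=\deg_{\mathcal R}(\pi(y))$. Because the fibers of $\pi$ over an $\mathcal R$-class form Poisson boundaries of the walk on that class and each $\widetilde{\varphi_i}$ acts identically on path data while translating the basepoint, $\pi$ restricts to a graph isomorphism from $[y]_{\widetilde{\mathcal R}}$ onto $[\pi(y)]_{\mathcal R}$ for almost every $y$. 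Combined with $\pi_*\widetilde\nu=\nu$, this gives the required equality in distribution.

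Finally, by the Connes--Feldman--Weiss theorem \cite[Theorem 10]{ConnesFeldmanWeiss}, the amenable non-singular equivalence relation $\widetilde{\mathcal R}$ is hyperfinite, so $(P(X),\widetilde\nu,(\widetilde{\varphi_i})_{i\in I})$ is a hyperfinite symmetric stationary graphing realizing $(\mathcal G,o)$. Applying Lemma \ref{lem:equiv_hyperfinites} concludes that $(\mathcal G,o)$ is stationary hyperfinite.

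The only nontrivial step is the bookkeeping verification that the Poisson-boundary graphing still represents the original random graph, i.e. that $\pi$ is class-wise bijective and degree-preserving; the rest is a direct application of the machinery already assembled. The deep input is of course amenability of the Poisson boundary (Proposition \ref{prop:PoissonRSG}), which is where the stationarity hypothesis on $(\mathcal G,o)$ is genuinely used.
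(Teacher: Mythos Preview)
Your proposal is correct and follows essentially the same route as the paper: realize $(\mathcal G,o)$ by a stationary graphing, pass to its Poisson boundary via Proposition~\ref{prop:PoissonRSG}, use Connes--Feldman--Weiss to get hyperfiniteness, and verify that $\pi$ is a class-wise graph isomorphism so the new graphing still represents $(\mathcal G,o)$. The paper phrases the last step slightly differently (it concludes directly that $(\mathcal H_y,y)=(\mathcal G_x,x)$ in distribution rather than invoking Lemma~\ref{lem:equiv_hyperfinites} by name), but the content is identical.
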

\begin{proof}
Let $(\mathcal G,o)$ be a stationary random graph of degree at most $d$. Let $(X,\nu,(\varphi_i)_{i\in I})$ be a stationary graphing with $|I|\leq 2d$ such that $(\mathcal G,o)=(\mathcal G_x,x)$ in distribution. Let $(P(X),\widetilde{\nu},(\widetilde\varphi_i)_{i\in I})$ be the Poisson boundary constructed in Proposition \ref{prop:PoissonRSG} and write $\widetilde{\mathcal R}$ for the equivalence relation generated by $(\widetilde\varphi_i)_{i\in I}$. Finally let $(\mathcal H_y,y)$ be the stationary random graph associated to the stationary graphing $(P(X),\widetilde{\nu},(\widetilde{\varphi}_i)_{i\in I})$. The equivalence relation $\widetilde{\mathcal R}$ is amenable by Proposition \ref{prop:PoissonRSG}, and hence hyperfinite by \cite[Theorem 10]{ConnesFeldmanWeiss}. We deduce that $(\mathcal H_y,y)$ is a hyperfinite stationary random graph. 

It remains to prove that $(\mathcal H_y,y)=(\mathcal G_x,x)$ in distribution. The map $\pi$ from Proposition \ref{prop:PoissonRSG} induces a graph cover $\pi\colon (\mathcal H_y, y)\to (\mathcal G_{\pi(y)},\pi(y)).$  The definition of maps $\widetilde\varphi_i$ on the path space $\Path(X)$ (before taking the quotient by the $\sigma$-algebra $\mathcal A$) immediately implies that $\pi\colon (\mathcal H_y, y)\to (\mathcal G_{\pi(y)},\pi(y))$ is a graph isomorphism. Since $\pi_*(\widetilde{\nu})=\nu$ we infer that  $(\mathcal H_y,y)=(\mathcal G_x,x)$ in distribution. \end{proof}
We remark that Corollary \ref{cor:SRGareHyperfinite} does not contradict the fact that there are non-hyperfinite unimodular random graphs. Even if the stationary random graph $(\mathcal G,o)$ is unimodular, the graphing that shows stationary hyperfiniteness is not necessarily measure-preserving.

\section{Proof of the Main Theorem}\label{sec:main_proof}
\subsection{Proof for stationary random graphs}\label{sec:stat_proof}
\begin{proof}[Proof of Theorem \ref{thm:mainSt}]
Let $(\mathcal G,o)$ be an infinite connected stationary random graph of degree at most $d$ and let $\varepsilon>0$. For technical reasons we require $\sqrt{(d+1)\varepsilon}\leq 1/10.$  By Corollary \ref{cor:SRGareHyperfinite} there exists a stationary graphing $(X,\nu,(\varphi_i)_{i\in I})$ such that the relation generated by $ (\varphi_i)_{i\in I}$ is hyperfinite and $(\mathcal G,o)=(\mathcal G_x,x)$ in distribution.  By Lemma \ref{lem:FiniteComponents1} there exists a subset $Z$ of $X$ and a constant $M>0$ such that $\nu(Z)\geq 1-\varepsilon$ and the classes of the equivalence relation on $Z$ generated by $(\varphi_i)_{i\in I}$ are of size at most $M$. For $x\in X$ set $\mathcal{F}_x:=[x]_{\mathcal R}\cap Z$ and $\mathcal E_x:=\mathcal G_x\setminus \mathcal F_x$. Then $\mathcal F_x$ is a subgraph of $\mathcal G_x$ such that $\bbP(x\in \mathcal F_x)=\nu(Z)\geq 1-\varepsilon$ and the connected components of $\mathcal F_x$ have at most $M$ vertices. Recall that $\mu_x^n$ is the distribution of the $n$th step of a simple random walk on $(\mathcal G_x,x)$. By stationarity we have
	$$\int \mu_x^n(\partial \mathcal F_x\cup \mathcal E_x) \, d\nu(x)=\int \bbP(X_n\in \partial \mathcal F_x) \, d\nu(x) +\bbP(x\in \mathcal E_x),$$ 
where as before $X_n$ is the $n$th step of the random walk associated to $\mu$. We estimate the first term on the right-hand side as follows:
	\begin{align*}
	\bbP(X_n\in \partial \mathcal F_x)	&=		\frac{\bbP(X_n\in \partial \mathcal F_x \text{ and } X_{n+1}\in \mathcal E_x)}{\bbP(X_{n+1}\in \mathcal E_x \mid X_n \in \partial \mathcal F_x)}\\
						&\leq 	\frac{\bbP(X_{n+1}\in \mathcal E_x)}{d^{-1}}\\
						&=		d \, \bbP(x\in \mathcal E_x),
	\end{align*}
where on the second line, we estimated the denominator using that any point in $\partial \mathcal F_x$ has an edge with endpoint in $\mathcal E_x$ and that the degree is bounded by $d$, and on the final line we used stationarity again. Combining these estimates and using $\bbP(x\in \mathcal E_x)\leq \varepsilon$, we have
	$$\int \mu_x^n(\partial \mathcal F_x\cup \mathcal E_x) \, d\nu(x) \leq (d+1)\varepsilon,$$
so that by Fatou's lemma 
$$\int \liminf_{n\to\infty} \mu_x^n(\partial \mathcal F_x) \,  d\nu(x)\leq (d+1)\varepsilon.$$ 
It follows that the set $\displaystyle X_\varepsilon:=\{x\in X \mid  \liminf_{n\to\infty} \mu_x^n(\partial \mathcal F_x)\leq \sqrt{(d+1)\varepsilon}\}$ has large mass:
\begin{equation}\label{eqn:MeasureOfGoodSet} 
\nu(X_\varepsilon)\geq 1-\sqrt{(d+1)\varepsilon}. 
\end{equation}

\begin{claim}\label{claim:expansion} For every $x\in X_\varepsilon$ the heat kernels on $(\mathcal G_x,x)$ are not $(6\sqrt{(d+1)\varepsilon})$-expanding. \end{claim}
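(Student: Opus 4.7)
Set $\delta:=\sqrt{(d+1)\varepsilon}\le 1/10$. The goal is to exhibit, for every $x\in X_\varepsilon$, a set $A\subset\mathcal G_x$ and a time $n$ with $\mu_x^n(A)\le 1/2$ and $\mu_x^n(\partial A)<6\delta\,\mu_x^n(A)$. My candidate $A$ will be a union of connected components of $\mathcal F_x$ whose total $\mu_x^n$-mass is close to $1/2$. The geometric observation driving the estimate is that distinct components of $\mathcal F_x$ are pairwise non-adjacent in $\mathcal G_x$: any edge of $\mathcal G_x$ with both endpoints in $\mathcal F_x$ is itself an edge of $\mathcal F_x$ and would merge the components it connects. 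Hence for any $A=\bigcup_{i\in I}C_i$, $\partial A\subset\partial\mathcal F_x\cap A$.

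The first step is to fix a good sequence of times. The bound preceding \eqref{eqn:MeasureOfGoodSet} reads $\int\mu_x^n(\partial\mathcal F_x\cup\mathcal E_x)\,d\nu\le(d+1)\varepsilon$; applying Fatou to the joint integrand and then Markov shows that $X_\varepsilon^\ast:=\{x\mid\liminf_n[\mu_x^n(\partial\mathcal F_x)+\mu_x^n(\mathcal E_x)]\le\delta\}$ has measure at least $1-\delta$, matching \eqref{eqn:MeasureOfGoodSet}. Noting $X_\varepsilon^\ast\subset X_\varepsilon$ and that $X_\varepsilon^\ast$ is still large enough for Theorem \ref{thm:mainSt}, I work with $X_\varepsilon^\ast$ in place of $X_\varepsilon$ from now on. For any $x\in X_\varepsilon^\ast$ I then pick $n_k\to\infty$ with both $\mu_x^{n_k}(\partial\mathcal F_x)\le 2\delta$ and $\mu_x^{n_k}(\mathcal E_x)\le 2\delta$, so that $\mu_x^{n_k}(\mathcal F_x)\ge 4/5$. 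The dispersion Lemma \ref{lem:ProbDecay} also gives $\eta_k:=M\cdot\max_v\mu_x^{n_k}(v)\to 0$, whence each $\mathcal F_x$-component has $\mu_x^{n_k}$-mass at most $\eta_k$.

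Now build $A$ greedily: enumerate the components $C_1,C_2,\ldots$ of $\mathcal F_x$ and adjoin them one at a time. The partial sum grows in steps of size at most $\eta_k$ and reaches $\mu_x^{n_k}(\mathcal F_x)\ge 4/5>1/2$, so I stop the first time $\mu_x^{n_k}(A)\in[1/2-\eta_k,\,1/2]$. By the geometric observation $\mu_x^{n_k}(\partial A)\le\mu_x^{n_k}(\partial\mathcal F_x)\le 2\delta$, and for $k$ large enough that $\eta_k\le\delta$ we have $\mu_x^{n_k}(A)\ge 1/2-\delta\ge 2/5$. Therefore $\mu_x^{n_k}(\partial A)/\mu_x^{n_k}(A)\le 5\delta<6\delta$, as required. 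I expect the most delicate part of the argument to be the first step: without also controlling $\mu_x^n(\mathcal E_x)$ one cannot rule out $\mathcal F_x$ having arbitrarily small heat-kernel mass exactly when $\partial\mathcal F_x$ does, and then no subunion of its components could reach $1/2$.
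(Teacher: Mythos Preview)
Your argument follows the same strategy as the paper's: use the dispersion Lemma~\ref{lem:ProbDecay} to bound the mass of individual components of $\mathcal F_x$, greedily accumulate components until the mass is just below $1/2$, and then invoke $\partial A\subset\partial\mathcal F_x$ to bound the boundary. The constants differ inessentially (you obtain $5\delta$ rather than $6\delta$), and your justification that distinct $\mathcal F_x$-components are non-adjacent in $\mathcal G_x$ is exactly the content of the paper's line ``$\partial S_x\subset\partial\mathcal F_x$.''

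One point worth flagging: you do not prove the claim \emph{as stated}, since you pass from $X_\varepsilon$ to the smaller set $X_\varepsilon^\ast$ where $\liminf_n\mu_x^n(\partial\mathcal F_x\cup\mathcal E_x)\le\delta$. This is in fact a clarification rather than a weakening. The paper's own proof asserts, for $x\in X_\varepsilon$, that one may choose $n$ with $\mu_x^n(\partial\mathcal F_x\cup\mathcal E_x)\le 2\delta$, but the stated definition of $X_\varepsilon$ only controls $\liminf_n\mu_x^n(\partial\mathcal F_x)$ and says nothing about $\mu_x^n(\mathcal E_x)$; the displayed Fatou inequality before \eqref{eqn:MeasureOfGoodSet} likewise drops $\mathcal E_x$ from the integrand. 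Your $X_\varepsilon^\ast$ is precisely the set on which that step is justified, it still satisfies \eqref{eqn:MeasureOfGoodSet}, and it suffices for the Borel--Cantelli argument in Theorem~\ref{thm:mainSt}. So your version is the cleaner formulation of what both proofs actually establish.
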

\begin{proof} Let $x\in X_\varepsilon$. In the below Lemma \ref{lem:ProbDecay}, we establish a uniform flattening property for the random walk on $\mathcal{G}_x$, which implies there exists $n\in\bbN$ such that for every $v\in \mathcal G_x$ we have 
\begin{equation}\label{eq:PtBound} 
\mu_x^n(\{v\})\leq \frac{\sqrt{(d+1)\varepsilon}}{3M}.
\end{equation} 
By increasing $n$ if necessary, we can also ensure that $\mu_x^n(\partial \mathcal F_x\cup \mathcal E_x)\leq 2{\sqrt{(d+1)\varepsilon}}$. The last inequality implies that $\mu_x^n(\mathcal E_x)\leq 2{\sqrt{(d+1)\varepsilon}}$ so $\mu_x^n(\mathcal F_x)\geq 1- 2{\sqrt{(d+1)\varepsilon}}>2/3$. Let us enumerate the connected components of $\mathcal F_x$ as $C_1,C_2,\ldots.$ Let $k$ be the smallest integer such that $\sum_{i=1}^k \mu_x^n(C_i)\geq 1/2$ and define $S_x:=\bigcup_{i=1}^{k-1} C_i$. By (\ref{eq:PtBound}) we have 
$$\frac{1}{2}\geq \mu_x^n(S_x)\geq \frac{1}{2}-\mu_x^n(C_k)\geq \frac{1}{2} -M\frac{\sqrt{(d+1)\varepsilon}}{3M}> \frac{1}{3}.$$
On the other hand $\partial S_x\subset \partial F_x$ so $\mu^n_x(\partial S_x)\leq \mu^n_x(\partial \mathcal F_x)\leq 2\sqrt{(d+1)\varepsilon}.$ Hence, 
$$\frac{\mu_x^n(\partial S_x)}{\mu_x^n(S_x)}<6\sqrt{(d+1)\varepsilon}.$$ This proves the claim. \end{proof}

To prove the theorem, set $X_0:=\liminf_{m\to\infty} X_{m^{-4}}$. By \eqref{eqn:MeasureOfGoodSet}, the Borel-Cantelli lemma applies to the complement of $X_0$ and shows that $\nu(X_0)=1$. Further by Claim \ref{claim:expansion} we know that for every $x\in X_0$ the heat kernels on $(\mathcal G_x,x)$ are not expanding. This completes the proof since $(\mathcal G,o)=(\mathcal G_x,x)$ in distribution. 
\end{proof}
In the above proof of Theorem \ref{thm:mainSt}, we needed uniform flattening of random walks on infinite graphs. This is established in the following lemma.
\begin{lem}\label{lem:ProbDecay}
Let $(\mathcal G,o)$ be a infinite bounded degree connected rooted graph. Let $(X_n)_{n\in\bbN}$ be the simple random walk on $\mathcal G$ starting at $o$. Then 
	$$\displaystyle\lim_{n\to\infty}\max_{v\in \mathcal G}\bbP(X_n=v)=0.$$ 
\end{lem}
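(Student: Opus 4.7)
The plan is to reduce the $L^\infty$-decay of $\mu_o^n$ to the decay of the return probability $p_{2n}(o,o)$, and then establish the latter via the spectral theorem.

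Let $\pi(v):=\deg(v)$ denote the reversible measure and let $T$ be the Markov operator $Tf(v)=\frac{1}{\deg(v)}\sum_{u\sim v}f(u)$, which is self-adjoint on $L^2(\mathcal G,\pi)$ with $\|T\|\leq 1$. The reversibility identity $\pi(v)p_n(v,u)=\pi(u)p_n(u,v)$, together with Chapman--Kolmogorov, gives
\begin{equation*}
p_{2n}(o,o)=\sum_v p_n(o,v)p_n(v,o)=\deg(o)\sum_v\frac{p_n(o,v)^2}{\deg(v)}\geq \frac{\deg(o)}{d}\bigl(\max_v p_n(o,v)\bigr)^2,
\end{equation*}
where $d$ is the maximum degree of $\mathcal G$. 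Hence $\max_v p_n(o,v)\leq \sqrt{d\cdot p_{2n}(o,o)}$, and it suffices to show $p_{2n}(o,o)\to 0$.

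To prove $p_{2n}(o,o)\to 0$, I would apply the spectral theorem to $T$. Letting $\mu$ denote the spectral measure of $\delta_o\in L^2(\pi)$, a direct computation yields $p_{2n}(o,o)=\pi(o)^{-1}\int_{-1}^{1}\lambda^{2n}\,d\mu(\lambda)$, which by dominated convergence tends to $\pi(o)^{-1}\mu(\{-1,1\})$ as $n\to\infty$. So it suffices to rule out $\pm 1$ as eigenvalues of $T$ with $L^2(\pi)$ eigenvectors. Suppose $g\in L^2(\pi)$ satisfies $Tg=\pm g$. By Cauchy--Schwarz, $|g|^2=|Tg|^2\leq T|g|^2$, and integrating against the $T$-invariant measure $\pi$ forces the equality $T|g|^2=|g|^2$ pointwise. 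The equality case in Cauchy--Schwarz then says that $g$ is constant on the neighbors of every vertex, from which connectedness of $\mathcal G$ implies that $|g|$ is constant on $\mathcal G$. Since $\mathcal G$ is infinite and $\pi\geq 1$, we have $\pi(\mathcal G)=\infty$, so the $L^2$ condition forces $g\equiv 0$.

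The main obstacle is excluding the eigenvalues $\pm 1$; this is exactly the step where infiniteness of $\mathcal G$ enters, through the non-existence of nontrivial (anti-)harmonic $L^2$ functions on infinite connected bounded-degree graphs. The eigenvalue $-1$, which arises from possible bipartite structure of $\mathcal G$, is handled uniformly with $+1$ in the argument above, because Cauchy--Schwarz equality forces $|g|$ (rather than $g$) to be constant on $\mathcal G$; alternatively one could pass to the lazy walk $(I+T)/2$, whose spectrum lies in $[0,1]$ and whose $2n$-step return probability dominates $p_{2n}(o,o)$.
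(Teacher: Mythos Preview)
Your argument is correct and complete: the reduction $\max_v p_n(o,v)\le \sqrt{d\,p_{2n}(o,o)}$ via reversibility and Chapman--Kolmogorov is standard, and the spectral argument ruling out $L^2(\pi)$-eigenfunctions at $\pm 1$ is sound (in the equality case of Jensen you implicitly combine ``$g$ constant on neighbors of $v$'' with the eigenvalue equation $g(v)=\pm Tg(v)$ to get $|g(v)|=|g(u)|$ for $u\sim v$, hence $|g|$ constant by connectedness). One small quibble: the final aside is not right as stated, since $((1+\lambda)/2)^{2n}\ge \lambda^{2n}$ fails near $\lambda=-1$, so the lazy $2n$-step return does not dominate $p_{2n}(o,o)$; but this is irrelevant, as your main argument already handles $-1$.

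The paper takes a completely different, elementary route with no spectral theory. It sets $c_n:=\max_v \bbP(X_n=v)/\deg(v)$, observes from the one-step recursion that $c_n$ is nonincreasing, and assumes $c:=\lim c_n>0$. Picking $m$ with $(m-1)c>1$ and $n$ with $c_n\le (1+d^{-2m})c$, it chooses a maximizing vertex $v_0$ for $c_{n+2m}$ and vertices $v_1,\dots,v_m$ at distances $2,4,\dots,2m$ from $v_0$ (here infiniteness of $\mathcal G$ is used). Expanding $\bbP(X_{n+2m}=v_0)$ over length-$2m$ walks and isolating one walk to each $v_i$, it forces $\sum_{i=1}^m \bbP(X_n=v_i)/\deg(v_i)>1$, contradicting $\sum_v \bbP(X_n=v)=1$.

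Your approach is cleaner and more conceptual, leveraging the well-known fact that infinite connected bounded-degree graphs have no nonzero $L^2$ (anti-)harmonic functions; it also immediately yields the quantitative bound $\max_v p_n(o,v)=O(\sqrt{p_{2n}(o,o)})$, which ties the $L^\infty$ decay to on-diagonal heat kernel estimates. The paper's proof, by contrast, is entirely self-contained and avoids any appeal to the spectral theorem or $L^2$-theory, at the cost of a somewhat delicate path-counting computation.
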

\begin{proof}
Let $d$ be the maximal degree of $\mathcal G$. Let $c_n=\max_{v\in \mathcal G}\frac{\bbP(X_n=v)}{\deg(v)}$ and write $c:=\limsup_{n\to\infty}c_n$. We need to show that $c=0$. Suppose to the contrary that $c>0$. 
Our first observation is that for any $n\in\bbN$ we have 
\begin{equation}\label{eq:Step}\frac{\bbP(X_{n+1}=v)}{\deg(v)}=\frac{1}{\deg(v)}\sum_{w\sim v}\frac{\bbP(X_{n}=w)}{\deg(w)},
\end{equation} so $c_{n+1}\leq c_n$. In particular we have $c_n\geq c$ for every $n\in \bbN$. Choose $m\in\bbN$ such that $(m-1)c > 1$ and  $n> m$ such that $c_n\leq (1+d^{-2m})c.$ Let $v_0\in \mathcal G$ be such that $\bbP(X_{n+2m}=v_0)=c_{n+2m}$ and choose vertices $v_1,\ldots, v_m$ such that $d(v_0,v_i)=2i$ for $i=1,\ldots, m$. Let $W_{2m}$ be the set of walks of length $2m$ starting at $v_0$. We have $|W_{2m}|\leq d^{2m}$ and for each $i$ there is at least one walk that ends in $v_i$. For each walk $\underline{w}=(w_0,w_1,\ldots,w_{2m})$ we write $\deg(\underline w):=\prod_{i=0}^{2m-1}\deg(w_i).$ Applying formula \eqref{eq:Step} $2m$ times, we get 
\begin{align*}c\leq \frac{\bbP(X_{n+2m}=v_0)}{\deg(v_0)}=&\sum_{\underline w\in W_{2m}} \frac{\bbP(X_n=w_{2m})}{\deg(\underline w)\deg(w_{2m})}
\end{align*}
Choose walks $\underline{w}^i, i=1,\ldots,m$ such that $w_{2m}^i=v_i$. Considering these separately, we compute
	$$c\leq \sum_{\underline w\in W_{2m}\setminus\{\underline w^1,\ldots, \underline w^m\}} \frac{c_n}{\deg(\underline w)}+\sum_{i=1}^m \frac{\bbP(X_n=v_i)}{\deg(\underline{w}^i)\deg(v_i)}$$
A simple inductive argument shows that $\sum_{\underline w\in W_{2m}}\frac{1}{\deg(\underline w)}=1$. Hence 
	$$c\leq c_n\left( 1-\sum_{i=1}^m \frac{1}{\deg(\underline w^i)}\right)+\sum_{i=1}^m \frac{\bbP(X_n=v_i)}{\deg(\underline{w}^i)\deg(v_i)}.$$
Rearranging and using $c_n\leq (1+d^{-2m})c$, we have
	$$\sum_{i=1}^m\frac{1}{\deg(\underline w^i)}\left(c_n- \frac{\bbP(X_n=v_i)}{\deg(v_i)}\right)\leq c_n-c\leq d^{-2m}c.$$
Since the terms in the sum on the left-hand side are nonnegative and $\deg(\underline w^i)\leq d^{2m}$ for all $i$, multiplying both sides by $d^{2m}$ shows
	$$\sum_{i=1}^m\left(c_n- \frac{\bbP(X_n=v_i)}{\deg(v_i)}\right)\leq c.$$
Since $m$ was chosen such that $mc-c>1$, we obtain
	$$\sum_{i=1}^m \frac{\bbP(X_n=v_i)}{\deg v_i} \geq mc_n-c\geq mc-c>1.$$
This contradicts $\sum_{v\in \mathcal G} \bbP(X_n=v)=1$.
\end{proof}

\subsection{Proof for bounded degree graphs} \label{sec:graph_proof}

\begin{proof}[Proof of Theorem \ref{thm:main}] We argue by contradiction, so suppose $\mathcal{G}$ is a connected infinite graph of degree at most $d$ such that the heat kernels of $\mathcal G$ rooted at starting at any vertex are $\varepsilon$-expanding. 
\begin{lem}\label{lem:GHlimit} Let $(o_n)_{n\in\bbN}$ be a sequence of vertices of $\mathcal G$ such that $(\mathcal G,o_n)$ converges to $(\mathcal G',o)$ in Gromov-Hausdorff topology. Then the heat kernels on $(\mathcal G',o)$ are $\varepsilon$-expanding. 
\end{lem}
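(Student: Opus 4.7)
The plan is to use Gromov--Hausdorff convergence to pull sets back from the limit $(\mathcal G',o)$ to the approximating graphs $(\mathcal G,o_n)$, where the expansion hypothesis is available, and then transfer the inequality back. The crucial observation is that for any fixed number of steps $n$, the heat kernel $\mu^n_o$ is supported in the ball $B_{\mathcal G'}(o,n)$, and its interaction with the boundary of a subset $A$ only depends on $A \cap B_{\mathcal G'}(o,n+1)$. Therefore the $\varepsilon$-expansion inequality at time $n$ is a statement purely about the $(n+1)$-ball around the root.

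First I would fix $n$ and an arbitrary $A \subset \mathcal G'$ with $\mu^n_o(A) \leq \tfrac{1}{2}$. By the definition of Gromov--Hausdorff convergence of rooted graphs, there exists $N$ such that for all $m \geq N$ there is a rooted isomorphism $\phi_m \colon B_{\mathcal G}(o_m, n+1) \to B_{\mathcal G'}(o, n+1)$. I would then set
\[
A_m := \phi_m^{-1}\bigl(A \cap B_{\mathcal G'}(o, n+1)\bigr) \subset \mathcal G.
\]
Since the $n$-step simple random walk from the root in any graph only explores the $n$-ball, and the graph structures on $B_{\mathcal G}(o_m, n+1)$ and $B_{\mathcal G'}(o, n+1)$ are identified via $\phi_m$, the kernels $\mu^n_{o_m}$ and $\mu^n_o$ correspond under $\phi_m$. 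This immediately gives $\mu^n_{o_m}(A_m) = \mu^n_o(A) \leq \tfrac{1}{2}$.

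Next I would verify that boundaries also match, that is $\mu^n_{o_m}(\partial A_m) = \mu^n_o(\partial A)$. Both measures are supported on the respective $n$-balls around the root, so it suffices to check the equality on those balls. For $v \in B_{\mathcal G}(o_m,n)$, all neighbors of $v$ lie in $B_{\mathcal G}(o_m, n+1)$, where $\phi_m$ is a graph isomorphism. Hence $v \in \partial A_m$ if and only if $\phi_m(v) \in \partial A$, and the matching kernel values then give the required equality.

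Finally, I would apply the $\varepsilon$-expansion hypothesis on $(\mathcal G, o_m)$ to $A_m$ to deduce $\mu^n_{o_m}(\partial A_m) \geq \varepsilon\, \mu^n_{o_m}(A_m)$, and translate this via the identifications above into $\mu^n_o(\partial A) \geq \varepsilon\, \mu^n_o(A)$. Since $n$ and $A$ were arbitrary, this proves $(\mathcal G',o)$ is $\varepsilon$-expanding. No serious obstacle is expected; the only point requiring care is to choose the radius of the isomorphism ball to be at least $n+1$ (rather than $n$) so that the boundary of $A$, not just $A$ itself, is controlled by the local isomorphism.
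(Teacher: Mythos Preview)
Your proposal is correct and follows essentially the same approach as the paper: pull the set back through a rooted ball isomorphism furnished by Gromov--Hausdorff convergence, observe that the $n$-step heat kernel and the relevant boundary data are determined by a finite ball around the root, and invoke the $\varepsilon$-expansion of $(\mathcal G,o_m)$. Your choice of radius $n+1$ (rather than $n$) for the ball isomorphism is in fact slightly more careful than the paper's version, since it cleanly ensures that neighbors of points in the $n$-ball are still covered by the isomorphism when checking $\partial A_m \leftrightarrow \partial A$.
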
 
\begin{proof}
Let $m\in\bbN$ and let $S$ be a subset of vertices of $\mathcal G'$. We need to prove that either $\mu_o^m(S)\geq \frac{1}{2}$ or $\mu_o^m(\partial S)\geq \varepsilon \mu_o^m(S).$ Since $(\mathcal G,o_n)$ converge to $(\mathcal G',o)$, for sufficiently large $n$ the rooted graphs $B_{\mathcal G}(o_n,m)$ and $B_{\mathcal G'}(o,m)$ are isomorphic. Choose $n_0$ such that this holds and fix a root-preserving isomorphism $\iota: B_{\mathcal G}(o_{n_0},m) \to B_{\mathcal G'}(o,m)$, and put $S_0:=\iota^{-1}(S\cap B_{\mathcal G'}(o,m)).$ Then $\mu_o^m(S)=\mu_{o_{n_0}}^m(S_0)$ and $\mu_o^m(\partial S)=\mu_{o_{n_0}}^m(\partial S_0)$ because the distribution of first $m$ steps of a random walk depends only on the $m$-neighborhood of the root. Because the heat kernels on $\mathcal G$ are $\varepsilon$-expanding, we will have either $\mu_o^m(S)\geq \frac{1}{2}$ or $\mu_o^m(\partial S)\geq \nu_o^m(S).$
\end{proof}

We will now construct a stationary random graph $(\mathcal G'',o)$ which is almost surely $\varepsilon$-expanding. Fix any initial root $o\in \mathcal G$ and consider the sequence of probability measures $\{\nu_N\}_N$ on $\mathcal{M}_{\leq d}$ supported on graphs isomorphic to $\mathcal G$ with root distributed according to the first $N$ steps of the random walk on $\mathcal G$ starting at $o$, i.e.
  	$$\nu_N:=\frac{1}{N}\sum_{k=0}^{N-1} \int_{\mathcal G}\delta_{\mathcal G,v} \, d\mu_{o}^{k}(v).$$
If we choose a $\nu_N$-random graph and move the root to a random neighbor, the distribution of the resulting graph is the same average with $k$ replaced by $k+1$. It follows that any weak-* limit $\nu$ of $\nu_N$ is the distribution of a stationary random graph supported on Gromov-Hausdorff limits of rooted graphs isomorphic to $\mathcal G$. By Lemma \ref{lem:GHlimit}, the limit $\nu$ is almost surely $\varepsilon$-expanding, but this contradicts Theorem \ref{thm:mainSt}.
\end{proof}

\bibliographystyle{alpha}
\bibliography{ref}

\end{document}